\documentclass[12pt]{article}
\usepackage{amsmath,amsxtra,amssymb,color,latexsym,epsfig,amscd,amsthm,subfigure,fancybox,epsfig}
\usepackage[mathscr]{eucal}
\usepackage{graphicx}
\usepackage{enumerate}
\usepackage{epsfig} 
\usepackage{epstopdf}
\usepackage{cases}
\setlength{\oddsidemargin}{-0.08in}
\setlength{\evensidemargin}{-0.08in}
\setlength{\textheight}{9.4in}
\setlength{\textwidth}{6.8in}
\setlength{\topmargin}{-0.7in}

\newtheorem{thm}{Theorem}[section]
\newtheorem{prop}[thm]{Proposition}
\newtheorem{lm}[thm]{Lemma}
\newtheorem{rem}[thm]{Remark}

\newtheorem{exm}[thm]{Example}
\newtheorem{asp}{Assumption}[section]
\numberwithin{equation}{section}


\DeclareMathOperator{\trace}{tr}
\newcommand{\eps}{\varepsilon}

\newcommand{\h}{\mathcal{H}}
\newcommand{\A}{\mathcal{A}}
\newcommand{\B}{\mathcal{B}}
\newcommand{\p}{\mathfrak{p}}
\newcommand{\m}{\mathfrak{m}}
\newcommand{\C}{\mathcal{C}}

\newcommand{\F}{\mathcal{F}}

\newcommand{\E}{\mathbb{E}}
\newcommand{\LL}{\mathcal{L}}
\newcommand{\N}{{\mathbb{Z}}_+}
\newcommand{\Z}{\mathbb{Z}}
\newcommand{\K}{\mathcal{K}}
\newcommand{\PP}{\mathbb{P}}

\newcommand{\R}{\mathbb{R}}

\numberwithin{equation}{section}
\newcommand{\1}{\boldsymbol{1}}

\newcommand{\wdt}{\widetilde}

\newcommand{\bed}{\begin{displaymath}}
\newcommand{\eed}{\end{displaymath}}
\newcommand{\bea}{\bed\begin{array}{rl}}
\newcommand{\eea}{\end{array}\eed}

\newcommand{\ad}{&\!\!\!\disp}

\newcommand{\barray}{\begin{array}{ll}}
\newcommand{\earray}{\end{array}}

\def\disp{\displaystyle}
\newcommand{\rr}{{\Bbb R}}
\newcommand{\al}{\alpha}
\newcommand{\cd}{(\cdot)}

\def\bar{\overline}
\def\hat{\widehat}
\def\a.s{\text{\;a.s.\;}}

\begin{document}
\title{Modeling and Analysis of Switching Diffusion Systems:
Past-Dependent Switching with a Countable State Space\thanks{This
research was supported in part by the National Science Foundation under grant DMS-1207667.}}

\author{Dang Hai Nguyen\thanks{Department of Mathematics, Wayne State University, Detroit, MI
48202,
dangnh.maths@gmail.com.} \and
George Yin\thanks{Department of Mathematics, Wayne State University, Detroit, MI
48202,
gyin@math.wayne.edu.}}
\maketitle

\begin{abstract} Motivated by networked systems in random environment and controlled hybrid stochastic dynamic systems,
this work focuses on modeling and analysis of a class of
switching diffusions consisting of continuous and discrete components.
Novel features of the models include the discrete
component taking values in a countably infinite set, and the switching depending on the value of the
continuous component involving past history.
In this work, the existence and uniqueness of solutions of the associated stochastic differential equations are obtained. In addition,
Markov and Feller properties of a function-valued stochastic process associated with the hybrid diffusion are also proved.
In particular, when the switching rates depend only on the current state, strong Feller properties are obtained. These properties will pave
a way for future study of control design and optimization of such dynamic systems.

\bigskip
\noindent {\bf Keywords.} Switching diffusion, past-dependent switching,
countable state space, existence and uniqueness of solution, Feller property.

\bigskip
\noindent{\bf Mathematics Subject Classification.} 93E03, 60J60, 60H10, 92D25.

\bigskip
\noindent{\bf Running Title.}
Past-Dependent-Switching Diffusion Systems

\end{abstract}

\newpage

\section{Introduction}\label{sec:int}
Owing to the demand of modeling, analysis, and computation of complex networked systems, much attention has been devoted to building more realistic dynamic system models.
It has been well recognized that
in  many real-world applications,
 traditional models  using continuous processes represented by
 solutions to deterministic differential equations and stochastic differential equations alone  are often inadequate.
Arising from control engineering,
queueing networks,
manufacturing and production planning, parameter estimation,
 filtering  of dynamic systems,
 ecological and biological systems,
and financial engineering, etc., numerous complex systems contain both continuous dynamics and discrete events.
The discrete events in these systems are not normally representable by solutions of the usual differential equations.
Because of the demand, switching diffusions (also known as hybrid switching diffusions) have drawn growing and resurgent attention.
A switching diffusion is a two-component process $(X(t),\alpha(t))$ in which the continuous component $X(t)$ evolves according to the diffusion process whose drift and diffusion coefficients depend on the state of $\alpha(t)$, whereas $\alpha(t)$ takes values in a set consisting of isolated points.
Because of their importance, many papers have been devoted to such hybrid dynamic systems; see \cite{KZY,SX,YZ2,ZY}
and the references therein.
In their comprehensive treatment of hybrid switching diffusions,
Mao and Yuan \cite{MY} focused on $\alpha(t)$ being a continuous-time and homogeneous Markov chain  independent of the Brownian motion
and the generator of the Markov chain being a constant matrix.
 Realizing the need, treating the two components jointly,
 Yin and Zhu \cite{YZ} extended the study to the
 Markov process  $(X(t),\alpha(t))$
by allowing the generator $\alpha(t)$ to depend on the current state $X(t)$.
Properties of the underlying process including recurrence, positive recurrence, ergodicity, Feller properties, stability,
and invariance among others were investigated. Such study  provides us with a clear picture of the underlying processes. Nevertheless,
in both of the aforementioned books and most related papers
to date,
the switching process $\alpha(t)$ is assumed to have a finite state space.
One question naturally arises. What happens if the switching process has a countable state space? Much of the argument in \cite{YZ} relies on the interplay of stochastic processes and the associated systems of partial differential equations. Because the state space of $\alpha(t)$ was assumed to be a finite set,
one can essentially treat a system of partial differential equations with a finite number of equations. When we consider problems  involving
 a countable state space, the number of equations becomes infinite. Much more complex situation is encountered. Different methods have to be
 developed to treat the systems.

There are plenty of real-world applications involving such switching diffusions.
Perhaps, one of the most widely used control models in the literature is the so-called LQG (linear quadratic Gaussian regulator) problem;
see \cite[pp.165-166]{FlemingR} for a
traditional model. However, for many new applications in networked systems, it has been found that in addition to the random noise represented by Brownian type of disturbances, there is a source of randomness owing to the presence of random environment
that can be modeled by a continuous-time Markov chain.
Let $\al(t)$ be a continuous-time Markov chain with state space
$\N$ (the set of positive integers) and generator $Q$.
Consider the controlled dynamic system
\begin{equation}\label{system-eqn}
\begin{array}{ll} &\disp
d X(t)=[A(\al(t))X(t)+B(\alpha(t))u(t)] dt + \sigma(\al(t)) dW(t) ,\\
& \disp X(s)=x, \hbox{ for } s\le t\le T, \end{array}\end{equation}
where $X(t)\in \rr^{n_1}$ is the continuous state variable,
$u(t)\in \rr^{n_2}$ is the control,
$A(i)\in \rr^{n_1\times n_1}$ and
$B(i) \in \rr^{n_1\times n_2}$ are well defined and have finite
values for each $i\in \N$.
One may wish to find
the optimal control $u\cd$ so that the
expected quadratic cost function
\begin{equation}\label{cost-fn}
\disp
J(s,i,x,u(\cdot))= E\Big[\!\!\int^T_s [X^\top (t) M(\al(t))X(t)
+u^\top(t) N(\al(t)) u(t)] dt+X^\top(T)D X(T) \Big]
\end{equation}
is minimized.
The use of $\al(t)$ stems from the formulation of discrete events, and  the use of $\N$ enlarges the applicability of previous consideration of
finite state space cases.
Switched dynamic systems
 can also be found in, for example,  modeling
 impatient customers and
customer abandonment of
Markov-modulated service speeds in the  heavy-traffic regime and the many-server
systems in the Halfin-Whitt regime and the non-degenerate slowdown regime; see \cite{HZZ}.
 We also refer the reader to
Whitt
\cite{WW} for further reading on limit results  in queueing theory and many references therein.
In fact, in most of the queueing models, the discrete set is countable rather than finite.

Two more dynamic systems are to be presented in the next section,
in which the main interests are to find long-term behavior and control design
 in an ecological system and to find optimal strategies under long-run average criteria for  a pollution management problem.
 In order to study  the aforesaid problems, we first need to ensure that the systems under consideration have unique solutions
 and that the solutions possess good properties.
Motivated by these examples, we take up the challenge of considering a nonlinear hybrid diffusion $(X(t),\alpha(t))$
whose discrete component $\alpha(t)$ has an infinite state space in this paper.
Moreover, in lieu of allowing the switching process to depend on the current state $X(t)$ only, we assume that it is past dependent.
That is,
we assume that the generator of $\alpha(t)$ depends on
the past history of the continuous process.
This paper provides conditions for the existence and uniqueness
  of the solutions for given initial data, and to
  demonstrate the Markov-Feller property of 
  a function-valued stochastic process associated with the equation. Our study will build a bridge for future study on related control systems.

The rest of the paper is organized as follows. The formulation of hybrid switching diffusions with past-dependent switching and
countably many possible switching locations is given in Section \ref{sec:for}.
The existence and uniqueness of solutions to
the stochastic equations are then proved under suitable conditions in Section \ref{sec:exi}.
Section \ref{sec:mar} studies the Markov and Feller properties of a function-valued stochastic process associated with our equation.
The proof for the Feller property is rather complex because the state space of $\alpha(t)$ is infinite, the  space of continuous functions is not locally compact, and we do not assume uniform continuity of the
switching intensities.
In section \ref{sec:fel}, the strong Feller property of the hybrid diffusion without past-dependent switching is given.
Section \ref{sec:rem} provides further remarks and points out future research directions.
Finally, we provide the proofs of some technical results in an appendix.

\section{Formulation}\label{sec:for}
Let $r$ be a fixed positive number.
Denote by $\C([a,b],\R^n)$ the set of  $\R^n$-valued continuous functions defined on $[a, b]$.
In what follows,
we mainly work with $\C([-r,0],\R^n)$, and simply denote it by $\C:=\C([-r,0],\R^n)$.
Denote by $|x|$ the Euclidean norm of $x\in\R^n$.
For $\phi\in\C$, we use the norm $\|\phi\|=\sup\{|\phi(t)|: t\in[-r,0]\}$.
For $y(\cdot)\in\C([-r,\infty),\R^n)$ and $t\geq0$, we denote  by $y_t$
the
so-called segment function (or memory segment function) $y_t(\cdot):= y(t+\cdot)\in\C$.
Let $(\Omega,\F,\{\F_t\}_{t\geq0},\PP)$ be a complete filtered probability space with the filtration $\{\F_t\}_{t\geq 0}$ satisfying the usual condition,  i.e., it is increasing and right continuous while $\F_0$ contains all $\PP$-null sets.
Let $W(t)$ be an $\F_t$-adapted and $\R^d$-valued Brownian motion.
Suppose $b(\cdot,\cdot): \R^n\times\N\to\R^n$ and $\sigma(\cdot,\cdot): \R^n\times\N\to\R^{n\times d}$, where $\N= {\mathbb N} \setminus \{0\}=\{1,2,\dots\}$,  the set of positive integers.
Consider the two-component process $(X(t),\alpha(t))$, where
 $\alpha(t)$ is a pure jump process taking value in  $\N$, and $X(t)$ satisfies
\begin{equation}\label{eq:sde} dX(t)=b(X(t), \alpha(t))dt+\sigma(X(t),\alpha(t))dW(t).\end{equation}
{
We assume that if $\alpha(t-):=\lim_{s\to t^-}\alpha(s)=i$, then it can switch to $j$ at $t$ with intensity $q_{ij}(X_t)$ where $q_{ij}(\cdot):\C\to\R$.
When $q_{i}(\phi):=\sum_{j=1,j\ne i}^\infty q_{ij}(\phi)$ is uniformly bounded in $(\phi,i)\in\C\times\N$, and
$q_{i}(\cdot)$ and $q_{ij}(\cdot)$ are continuous,
one may view
the aforementioned assumption as
\begin{equation}\label{eq:tran}\begin{array}{ll}
&\disp \PP\{\alpha(t+\Delta)=j|\alpha(t)=i, X_s,\alpha(s), s\leq t\}=q_{ij}(X_t)\Delta+o(\Delta) \text{ if } i\ne j \
\hbox{ and }\\
&\disp \PP\{\alpha(t+\Delta)=i|\alpha(t)=i, X_s,\alpha(s), s\leq t\}=1-q_{i}(X_t)\Delta+o(\Delta).\end{array}\end{equation}
However, when $q_i(\phi)$ and  $q_{ij}(\phi)$ are either discontinuous or unbounded, it does not seem appropriate to use
\eqref{eq:tran} to model the switching intensity.
To formulate the problem in a general setting without the boundedness and continuity assumptions mentioned above,
we construct $\alpha(t)$ as the solution to a stochastic differential equation with respect to a Poisson random measure.
We elaborate on the idea below.
Let $\p(dt , dz)$ be a Poisson random measure with intensity $dt\times\m(dz)$ and $\m$ is the Lebesgue measure on $\R$ such that
$\p(\cdot, \cdot)$ is
independent of the Brownian motion $W(\cdot)$.
Let $\tilde\p$ be the Poisson point process associated with $\p(\cdot,\cdot)$ (see e.g., \cite{SR}).
Then $\tilde\p$ can lie in a set $A$ with intensity $\m(A)$, that is,
the expected number of Poisson points lying in $A$ during the period $dt$ is
$dt\times\m(A)$.
Using this fact, for each $i\in\Z$, we can construct disjoint sets $\{\Delta_{ij}(\phi), j\ne i\}$
such that $\m(\Delta_{ij}(\phi))=q_{ij}(\phi).$
Let $\tilde\p$ govern the switching of $\alpha(t)$ in the manner that
if $\alpha(t-)=i$ and there is a Poisson point in $\Delta_{ij}(X_t)$ at time $t$, then $\alpha(t)=j$.
If $\alpha(t-)=i$ and there is no Poisson point in $\cup_{j\ne i}\Delta_{ij}(X_t)$ at time $t$,
 $\alpha(t)$ remains $i$.
Using this idea, we formulate the equation for $\alpha(t)$ as follows.
For each function $\phi: [-r,0]\to\R^n$, and $i\in\N$,  let $\Delta_{ij}(\phi), j\ne i$  be the consecutive left-closed
 and right-open intervals of the real line, each having length $q_{ij}(\phi)$.
That is,
\bea \ad \Delta_{i1}(\phi)=[0,q_{i1}(\phi)),\
 \Delta_{ij}(\phi)=\Big[\sum_{k=1,k\ne i}^{j-1}q_{ik}(\phi),\sum_{k=1,k\ne i}^{j}q_{ik}(\phi)\Big), j>1, j\ne i.\eea
Define $h:\C\times\N\times\R\mapsto\R$ by
$h(\phi, i, z)=\sum_{j=1, j\ne i}^\infty(j-i)\1_{\{z\in\Delta_{ij}(\phi)\}}$,
where $\1_{\{z\in\Delta_{ij}(\phi)\}}=1$ if $z\in \Delta_{ij}$, otherwise $\1_{\{z\in\Delta_{ij}(\phi)\}}=0$,  is the indicator function.
The process $\alpha(t)$ can be defined as a solution to
$$d\alpha(t)=\int_{\R}h(X_t,\alpha(t-), z)\p(dt, dz).$$ }	
The pair $(X(t),\alpha(t))$ is therefore a solution to the system of equations
\begin{equation}\label{e2.3}
\begin{cases}
dX(t)=b(X(t), \alpha(t))dt+\sigma(X(t),\alpha(t))dW(t) \\
d\alpha(t)=\disp\int_{\R}h(X_t,\alpha(t-), z)\p(dt, dz).\end{cases}
\end{equation}
A strong solution to  \eqref{e2.3} on $[0,T]$ with initial data $(\phi,i_0)$
being
$\C\times\N$-valued and $\F_0$-measurable random variable,
is an $\F_t$-adapted process $(X(t),\alpha(t))$ such that
\begin{itemize}
  \item $X(t)$ is continuous and $\alpha(t)$ is cadlag (right continuous with left limits) almost surely (a.s.).
  \item $X(t)=\phi(t)$ for $t\in[-r,0]$ and  $\alpha(0)=i_0$
  \item $(X(t),\alpha(t))$ satisfies \eqref{e2.3} for all $t\in[0,T]$ a.s.
\end{itemize}
{
We will show in the Appendix that the solution $(X(t),\alpha(t))$ to \eqref{e2.3},
 satisfies \eqref{eq:tran} under suitable conditions.}
Let  $f(\cdot,\cdot): \R^n\times\N\mapsto\R$ be twice continuously differentiable in $x$ and bounded in $(x,i)\in\R^n\times\N$.
We define the ``operator" $\LL f(\cdot, \cdot): \C\times\N\mapsto \R$ by
\begin{equation}\label{eq:oper-def}
\begin{aligned}
\LL f(\phi, i)=&\nabla f(\phi(0),i)b(\phi(0),i)+\dfrac12\trace\Big(\nabla^2 f(\phi(0),i)A(\phi(0),i)\Big)\\
&\ +\sum_{j=1,j\ne i}^\infty q_{ij}(\phi)\big[f(\phi(0),j)-f(\phi(0),i)\big]\\
=&
\sum_{k=1}^nb_k(\phi(0),i)f_k(\phi(0),i)
+\dfrac12
\sum_{k,l=1}^na_{kl}(\phi(0),i)f_{kl}(\phi(0),i)\\
&\ +\sum_{j=1,j\ne i}^\infty q_{ij}(\phi)\big[f(\phi(0),j)-f(\phi(0),i)\big],
\end{aligned}
\end{equation}
where $b(x,i)=(b_1(x,i),\dots,b_n(x,i))^\top$, $\nabla f(x,i)=(f_1(x,i),\dots,f_n(x,i))\in \rr^{1\times n}$
and $\nabla^2 f(x,i)=(f_{ij}(x,i))_{n\times n}$ are the gradient and Hessian of $f(x,i)$ with respect to $x$, respectively,
 with \bea \ad f_k(x,i) =(\partial /\partial x_k) f(x,i),\
f_{kl} (x, i) = (\partial^2/\partial x_k \partial x_l) f(x,i),\
\hbox{ and }\\
\ad A(x,i)=(a_{kl}(x,i))_{n\times n}=\sigma(x,i)\sigma^\top(x,i),\eea
with $z^\top$ denoting the transpose of $z$.
Suppose that $(X(t),\alpha(t))$ satisfies \eqref{e2.3} and that
for any $T>0$,
\begin{equation}\label{q.bound}
\sup\limits_{t\in[0,T]}\{q_{\alpha(t)}(X_t)\}<\,\infty\text{ a.s.}
\end{equation}
Let  $\xi_k=\inf\{t>0: q_{\alpha(t)}(X_t)\geq k\}, k\in\N.$
By noting that $h(X_t, z)=0$ if $z\notin [0, q_{\alpha(t)}(X_t))$
and that
$$\int_{\R}\big[f\big(\phi(0),i+h(\phi,i, z)\big)-f(\phi(0)),i)\big]\m(dz)=\sum_{j=1,j\ne i}^\infty q_{ij}(\phi)\big[f(\phi(0),j)-f(\phi(0),i\big],$$
 we have
from It\^o's formula (see \cite[Theorem 4.4.7]{DA}) that
$$
\begin{aligned}
f(X(&t\wedge\xi_k),\alpha(t\wedge\xi_k))-f(X(0),\alpha(0))\\
=&\int_0^{t\wedge\xi_k}\LL f(X_s,\alpha(s-))ds+\int_0^{t\wedge\xi_k}\nabla f(X(s),\alpha(s-))\sigma(X(s),\alpha(s-))dW(s)\\
&+\int_0^{t\wedge\xi_k}\int_0^k\big[f\big(X(s),\alpha(s-)+h(X_s,\alpha(s-), z)\big)-f(X(s),\alpha(s-))\big]\mu(ds,dz)\\
=&\int_0^{t\wedge\xi_k}\LL f(X_s,\alpha(s-))ds+\int_0^{t\wedge\xi_k}\nabla f(X(s),\alpha(s-))\sigma(X(s),\alpha(s-))dW(s)\\
&+\int_0^{t\wedge\xi_k}\int_{\R}\big[f\big(X(s),\alpha(s-)+h(X_s,\alpha(s-), z)\big)-f(X(s),\alpha(s-))\big]\mu(ds,dz),
\end{aligned}
$$
where $\mu(ds,dz)$ is the compensated Poisson random measure given by
$$\mu(ds,dz)=\p(ds,dz)- \m(dz)ds .$$
Under condition \eqref{q.bound},
there exists a random integer $k_0=k_0(\omega)$ such that
$t\wedge\xi_k=t$ for any $k>k_0$.
As a result,
\begin{equation}\label{e2.4-a}
f(X(t),\alpha(t))-f(X(0),\alpha(0))=\int_0^t\LL f(X_s,\alpha(s-))ds+M_1(t)+M_2(t)\,\text{ a.s.,}
\end{equation}
where $M_1(\cdot)$ and $M_2(\cdot)$ are local martingales, defined by
\begin{equation}\label{eq:M1-M2}
\barray \ad
M_1(t)=\int_0^t\nabla f(X(s),\alpha(s-))\sigma(X(s),\alpha(s-))dW(s),
\\
\ad
M_2(t)=\int_0^t\int_\R\big[f\big(X(s),\alpha(s-)+h(X_s,\alpha(s-), z)\big)-f(X(s),\alpha(s-))\big]\mu(ds,dz).
\earray \end{equation}
It should be noted that $\LL$ is not the generator of the Markov process
$(X_t,\alpha(t))$. However this operator is very useful for
analyzing the process $(X(t),\alpha(t))$.
In view of \eqref{eq:M1-M2}, if $\tau_1\leq\tau_2$ are
 stopping times that are bounded above by $T$ a.s.,
and $f(\cdot,\cdot)$ and $\LL f(\cdot,\cdot)$ are bounded and \eqref{q.bound} holds, then
$$\E f(X(\tau_2),\alpha(\tau_2))=\E f(X(\tau_1),\alpha(\tau_1))+\E\int_{\tau_1}^{\tau_2}\LL f(X_t,\alpha(t-))dt.$$

\begin{rem}\label{sw-d}{\rm
If  $\alpha(t)$ depends on the continuous state, but there is no past dependence (that is,  $X_t$ is replaced by $X(t)$ in \eqref{eq:tran}, and
 $\phi$ and $\phi(0)$ are replaced by the current state $X(t)=x$ in \eqref{eq:oper-def}, respectively),
then  $\LL$ is indeed  the generator of the process $(X(t),\alpha(t))$.
Even in this case, the current paper settles the matter of the state space of the switching process being countable thus
 generalizes the study of finite state space cases as considered in \cite{YZ}.
}\end{rem}

\begin{exm}\label{exm:2} {\rm
This example stems from applications in  ecological systems and biological control.
Consider the evolution of two interacting species. One is micro,
which is
described by a logistic differential equation perturbed by a white noise.
The other is macro, we assume that its number of individuals follows a birth-death process. Let $X(t)$ be the density of the micro species and $\alpha(t)$ the population of the macro species.
{ The life cycle of a micro species is usually very short,
so it is reasonable to assume that the evolution
 of $X(t)$ can be described by the following past-independent equation}
\begin{equation}\label{ex2.1}
dX(t)=X(t)\big[a(\alpha(t))-b(\alpha(t))X(t)\big]dt+\sigma(\alpha(t))X(t)dW(t),
\end{equation}
where $a(i), b(i), \sigma(i)$ are positive constants for each $i\in\Z_+$.

{ On the other hand, the reproduction process of $\alpha(t)$ is assumed
 to be non-instantaneous.
More precisely, suppose the reproduction depends on the period of time from egg formation to hatching, say $r$.
Then we have}
\begin{equation}\label{ex2.2}
d\alpha(t)=\int_{\R}h(X_t,\alpha(t-), z)\p(dt, dz),
\end{equation}
where
$h(\phi, i, z)=\sum_{j=1, j\ne i}^\infty(j-i)\1_{\{z\in\Delta_{ij}(\phi)\}}$,
$\Delta_{i,i+1}(\phi)=[0,\beta_i(\phi))$, $\Delta_{i,i-1}(\phi)=[0,\delta_i(\phi))$,
$\Delta_{i,j}(\phi)=\emptyset$ if $j\notin \{i-1,i,i+1\}$ or $i=0$.
{ Usually
$\beta_i(\phi),\delta_i(\phi)$ can be given in the integral from $\beta_i(\phi)=\int_{-r}^0\tilde\beta_i(t)\phi(t)dt$,
$\delta_i(\phi)=\int_{-r}^0\tilde\delta_i(t)\phi(t)dt$,
for some appropriate weighting functions $\tilde\beta_i, \tilde\delta_i$.}
As can be seen from the above, the switching process at $t$ in fact depends on past history of the state $X(\cdot)$.
Investigating the interactions between the two species are very important to biological control. A basic
biological control problem
aims to choose a suitable living organism to control a
particular pest (see e.g., \cite{HHK, RL}). This chosen organism might be a predator, parasite, or disease, which will attack
the harmful insect.
To design and evaluate effectiveness of a biological control, some questions should be answered first.
For example, under which conditions the species will be permanent forever or  they will extinct at some instance? Whether or not there is an invariant measure associated with the system under consideration. Mathematically, these questions are related to the stability and ergodicity of the
corresponding stochastic systems, which will be studied in a future paper.
}
\end{exm}

\begin{exm}\label{exm:3} {\rm
Pollution management is vitally important and has a significant impact on environment.
 A major issue is concerned with the tradeoff of pollution accumulation and consumption, which affects
environmental policy making.
Following
the seminal paper of Keeler et al. \cite{Keeler}, much work has been devoted to the study of optimal control of dynamic economic systems.
In \cite{morimoto}, Kawaguchi and Morimoto treated a pollution accumulation problem of maximizing the long-run
average welfare using a controlled diffusion model. 
Assume that an economy consumes some good and meanwhile generates  pollution. The pollution stock is
gradually degraded and its instantaneous growth rate incorporates a random disturbance
with mean zero and constant variance. The social welfare is defined by the
utility of the consumption net of the disutility of pollution. The problem is to find
optimal consumption strategies for the society in the long-run average sense.
Departing from their formulation, we consider an extension of their model.
Suppose that there is a switching process $\al(t)$ taking values in $\N$ such that $\al(t)$ represents the level of pollution at time $t$.
Assume that
the stock of pollution at time $t$ is given by $X(t)$, a real-valued process,
and  there is a positive real-valued function $\rho(\cdot)$ so that
for each $i\in \N$,
the  rate of pollution decay is $\rho(i)$.
The  consumption rate (or flow of pollution) is a control process, which is  denoted by $c(t)$ at time $t$;
the social utility function of the consumption $c$ is denoted by $U(c)$, whereas
 the social disutility of the pollution stock $x$ is $D(x)$. We say that
the consumption rate is admissible if it is ${\cal F}_t$-measurable, where ${\cal F}_t=\{(X(s),\al(s)): s \le t\}$ such that
$0\le c(t) \le K_0$ for some $K_0>0$.
 The ultimate objective is to maximize the long-run average welfare
 \begin{equation}\label{eq:obj} J(c\cd)= \liminf_{T\to \infty} {1\over T} \E \int^T_0 [U(c(t))- D(X(t))] dt,\end{equation}
 subject to
\begin{equation}\label{eq:sys} dX(t)= [c(t) - \rho(\al(t)) X(t)] dt + \sigma (X(t),\al(t)) dW(t).\end{equation}
Assume that the pollution level $\al\cd$ satisfies the conditions \eqref{eq:tran}. First, it is reasonable
that the level of pollution can be modeled by
a continuous-time process taking values in $\N$. Second, to be more realistic, the pollution level depends on the
pollution stock $X(t)$ as well as some past history as given in \eqref{eq:tran}.
As another generalization of \cite{morimoto}, we assume that $\sigma$ in fact depends on
$(X(t),\al(t))$,
and the switching rate depends on some past history of the pollution
stock $X\cd$ as in \eqref{eq:tran},
and $\sigma^2 (x,i)>0$ for each $i\in \N$.
Treating the optimal pollution management problem, it is natural to consider the replacement of the average in \eqref{eq:obj} by the
average with respect to an invariant measure (if it exists) of the controlled systems. To do so, we need to make sure that \eqref{eq:sys} indeed has an invariant measure.
Before this matter can be settled, we need to show that the system has a unique solution for each initial data,
and the solution possesses certain desired properties such as Markov and Feller properties.
}\end{exm}

\section{Existence and Uniqueness of Solutions}\label{sec:exi}
We are now in a position to prove the existence and uniqueness in the strong sense of a  solution with  given initial
data under suitable conditions.
{ We give several sets of conditions. The main reason is due to the past dependence and the use of $\N$. First in contrast to the case of switching process staying in a finite set, care needs to be exercised regarding uniformity with respect to the switching set. Second, the past dependence requires careful handling of the use of Lipschitz continuity etc. and the uniformity with respect to the element in the corresponding function spaces. Depending on the preference,
Assumptions 3.1 allows certain bounds to be dependent of the switching state $i$, but uniform in the variable in the function
 space, whereas Assumption 3.2 requires uniformity in the bounds w.r.t. $i$, but requires the past dependent part be localized. Assumptions 3.3 and 3.4 relax the Lipschitz condition to local Lipschitz together with certain growth conditions presented by using   bounds with the help of Lyapunov functions. }

\begin{asp}\label{asp2.1} {\rm Assume the following conditions hold.
\begin{itemize}
  \item[{(i)}] For each $i\in\N$, there is a positive constant $L_i$ such that
$$|b(x,i)-b(y,i)|+|\sigma(x,i)-\sigma(y,i)|\leq L_i|x-y|\,\forall x,y\in\R^n.$$
\item[{(ii)}] $q_{ij}(\phi)$ is measurable in $\phi\in\C$ for all $i$ and $j\in\N$. Moreover,
      $$M:=\sup_{\phi\in\C, i\in\N}\{q_{i}(\phi)\}<\infty.$$
\end{itemize} }
\end{asp}

\begin{thm}\label{thm2.1}
Under Assumption {\rm\ref{asp2.1}}, for each
 initial data $(\xi,i_0)$, there exists a unique solution $(X(t),\alpha(t))$ to \eqref{e2.3}.
\end{thm}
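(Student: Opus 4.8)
The plan is to build the solution by the classical interlacing (Skorokhod) construction, exploiting the fact that Assumption \ref{asp2.1}(ii) bounds the total switching intensity uniformly by $M$. The key observation is that a jump of $\alpha$ can occur at a time $t$ only if the Poisson random measure $\p$ charges the strip $\{t\}\times[0,M)$, because $h(\phi,i,z)=0$ whenever $z\notin[0,q_i(\phi))\subseteq[0,M)$. Since the restriction of $\p$ to $[0,T]\times[0,M)$ is a Poisson random measure of finite total intensity $MT$, its time-projections form an a.s.\ finite, non-accumulating set of \emph{candidate} switching times $0<\sigma_1<\sigma_2<\cdots$ on every bounded interval. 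Between two consecutive candidate times the discrete component stays constant, so the problem decouples into finitely many ordinary It\^o SDEs on each bounded time interval.

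First I would set $X(t)=\xi(t)$ on $[-r,0]$, $\alpha(0)=i_0$, $\sigma_0=0$, and proceed by induction. Suppose $(X,\alpha)$ has been constructed on $[-r,\sigma_k]$ with $\alpha(\sigma_k)=i_k$. On $[\sigma_k,\sigma_{k+1})$ the coefficients reduce to the frozen functions $b(\cdot,i_k),\sigma(\cdot,i_k)$, which by Assumption \ref{asp2.1}(i) are globally Lipschitz, and hence of linear growth, for the fixed index $i_k$. Standard SDE theory (starting at the stopping time $\sigma_k$ from the $\F_{\sigma_k}$-measurable point $X(\sigma_k)$) then yields a unique, non-exploding, $\F_t$-adapted continuous solution on $[\sigma_k,\infty)$, whose restriction to $[\sigma_k,\sigma_{k+1}]$ I retain. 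At the candidate time $\sigma_{k+1}$, letting $z$ denote the mark of the Poisson point located there, I define the jump by the deterministic rule dictated by $h$: if $z\in\Delta_{i_k j}(X_{\sigma_{k+1}})$ for some $j\ne i_k$, set $\alpha(\sigma_{k+1})=j=:i_{k+1}$; otherwise (a fictitious jump, $z\in[q_{i_k}(X_{\sigma_{k+1}}),M)$) keep $\alpha(\sigma_{k+1})=i_k$. Because the candidate times do not accumulate, iterating covers $[0,T]$ for every $T$, and the resulting $(X(t),\alpha(t))$ is continuous in $X$, cadlag in $\alpha$, and satisfies both equations of \eqref{e2.3} by construction.

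The measurability and adaptedness of the construction are where the past dependence must be watched. The segment $X_{\sigma_{k+1}}=X(\sigma_{k+1}+\cdot)$ needed to form the intervals $\Delta_{i_k j}(X_{\sigma_{k+1}})$ is determined by the path of $X$ on $[\sigma_{k+1}-r,\sigma_{k+1}]$, which is already known at the $(k+1)$-st stage (possibly reaching back through previous intervals into the initial datum $\xi$). Assumption \ref{asp2.1}(ii) guarantees that $\phi\mapsto q_{i_k j}(\phi)$, and hence the endpoints of $\Delta_{i_k j}(\phi)$, are measurable, so the selection $z\mapsto j$ is a measurable function of $(X_{\sigma_{k+1}},z)$ and $\alpha(\sigma_{k+1})$ is $\F_{\sigma_{k+1}}$-measurable. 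Thus each $\sigma_k$ is an $\F_t$-stopping time and $(X,\alpha)$ is $\F_t$-adapted.

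For uniqueness I would argue pathwise on the same partition. Any two solutions share the candidate-time set, since it is an intrinsic functional of $\p$ and $M$ that does not reference the solution. On $[\sigma_k,\sigma_{k+1})$ two solutions with the same value at $\sigma_k$ and the same frozen index $i_k$ coincide by the uniqueness part of the Lipschitz SDE theory, and at $\sigma_{k+1}$ they make the identical jump because the selection rule is a deterministic function of the common past segment and the common mark $z$. Induction over $k$ then forces agreement on $[0,T]$ a.s. The main obstacle I anticipate is precisely the bookkeeping of the past dependence: ensuring that the common past segment at each interface $\sigma_k$ is genuinely shared by the two candidate solutions, and that the frozen-coefficient uniqueness can be patched across the random interfaces without the $i$-dependent Lipschitz constants $L_i$ causing trouble. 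The latter is harmless here because only the finitely many indices $i_0,\dots,i_k$ are visited on $[0,T]$, so no uniformity of $L_i$ in $i$ is required.
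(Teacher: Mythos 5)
Your proposal is correct and takes essentially the same route as the paper: an interlacing construction that solves the frozen-index Lipschitz SDE between switching times and uses the uniform bound $M$ on $q_i$ to guarantee that only finitely many switches can occur on any bounded interval. Your organization around the solution-independent grid of \emph{candidate} times (all Poisson points in $[0,T]\times[0,M)$, including fictitious jumps) is just a minor repackaging of the paper's domination estimate \eqref{e2.9}, though it does make the uniqueness step slightly cleaner since the partition does not depend on the solution.
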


\begin{proof}
It is well-known that part (i) of Assumption \ref{asp2.1} guarantees the existence and uniqueness of strong solutions to
the following diffusion
\begin{equation}\label{e2.4t}
dY(t)=b(Y(t), i)dt+\sigma(Y(t),i)dW(t) \ \text{ for each }\  i\in\Z_+.
\end{equation}
Then,
given a stopping time $\tau$ and an $\F_\tau$-measurable $\R^n$-valued random variable $y=y(\tau)$ (depending on $\tau$),
there exists a unique strong solution to \eqref{e2.4t} in $[\tau,\infty)$ satisfying $Y(\tau)= y(\tau)$
(see \cite[Remark 3.10]{MY}).
We can now  construct the solution to \eqref{e2.3} with initial data $(\xi, i_0)$ by the interlacing procedure similar to \cite[Chapter 5]{DA}.
Let $\tilde Y^{(0)}(t), t\geq0$ be the solution with initial data $\xi(0)$ to
\bed\label{e2.5}
d\tilde Y^{(0)}(t)=b(\tilde Y^{(0)}(t), i_0)dt+\sigma(\tilde Y^{(0)}(t),i_0)dW(t).
\eed
We also set $\tilde Y^{(0)}(t)=\xi(t)$ for $t\in[-\tau,0]$.
Let \bea \ad \tau_1=\inf\{t>0: \int_0^t\int_\R h(\tilde Y^{(0)}_s, i_0, z)\p(ds,dz)\ne 0\}
\ \hbox{ and } \\
\ad i_1=i_0+\int_0^{\tau_1}\int_\R h(\tilde Y^{(0)}_s, i_0, z)\p(ds,dz),\eea
and
$\tilde Y^{(1)}(t), t\geq\tau_1$ be the solution with $\tilde Y^{(1)}_{\tau_1}=\tilde Y^{(0)}_{\tau_1}$
to
\begin{equation}\label{e2.6}
d\tilde Y^{(1)}(t)=b(\tilde Y^{(1)}(t), i_1)dt+\sigma(\tilde Y^{(1)}(t),i_1)dW(t).
\end{equation}
Define
\bea \ad \tau_2=\inf\{t>\tau_1: \int_{\tau_1}^t\int_\R h(\tilde Y^{(1)}_s, i_1, z)\p(ds,dz)\ne 0\} \
\hbox{ and } \\
\ad i_2=i_1+\int_{\tau_1}^{\tau_2}\int_\R h(\tilde Y^{(1)}_s, i_1, z)\p(ds,dz).\eea
Note that, in the notation above, $\tilde Y^{(k)}_t$ is the function $s\in[-r,0]\mapsto \tilde Y^{(k)}(t+s).$
Continuing this procedure, let $\tau_\infty=\lim\limits_{k\to\infty}\tau_k$ and set
\begin{equation}\label{e2.7}
X(t)=\tilde Y^{(k)}(t), \ \alpha(t)=i_k\text{ if } \tau_k\leq t<\tau_{k+1}.
\end{equation}
Clearly, $X(t)$ satisfies that for every $t\geq0$,
\begin{equation}\label{e2.8}
\begin{cases}
X(t\wedge\tau_k)=X(0)+\disp\int_0^{t\wedge\tau_k}\big[b(X(s), \alpha(s))ds+\sigma(X(s),\alpha(s))dW(t)\big] \\
\alpha(t\wedge\tau_k)=i_0+\disp\int_0^{t\wedge\tau_k}\int_{\R}h(X_s,\alpha(s-), z)\p(ds, dz).\end{cases}
\end{equation}
To show that $X(t)$ is a global solution, we  need only
prove that $\tau_\infty=\infty$ a.s.
For any $T>0$,
\begin{equation}\label{e2.9}
\begin{aligned}
\PP\{\tau_k\leq T\}=&\PP\big\{\int_0^{T\wedge\tau_k}\int_{\R}\1_{\{z\in[0,q_{\alpha(s-)}(X_s))\}}\p(ds, dz)= k\big\}\\
\leq&\PP\big\{\int_0^{T\wedge\tau_k}\int_{\R}\1_{\{z\in[0,M)\}}\p(ds, dz)\geq k\big\}\\
\leq&\PP\big\{\int_0^{T}\int_{\R}\1_{\{z\in[0,M)\}}\p(ds, dz)\geq k\big\}\\
=&\sum_{l=k}^\infty e^{-MT}\dfrac{(MT)^l}{l!}.
\end{aligned}
\end{equation}
It follows that $\PP\{\tau_k\leq T\}\to0$ as $k\to\infty$.
As a result $\tau_\infty=\infty$ a.s.
By this construction, it can be seen that $X(t)$ is continuous and $\alpha(t)$ is cadlag almost surely.
The uniqueness of $(X(t),\alpha(t))$ follows from the uniqueness of $\tilde Y^{(k)}(t)$ on $[\tau_{k},\tau_{k+1}]$ and the uniqueness of
$i_k$ defined by $$i_k=i_{k-1}+\int_{\tau_{k-1}}^{\tau_k}\int_\R h(\tilde Y^{(k-1)}_t, i_{k-1}, z)\p(dt,dz).$$
This concludes the proof.
\end{proof}

\begin{asp}\label{asp2.2} {\rm Assume the following conditions hold.
\begin{itemize}
  \item[{(i)}]  There is a positive constant $L$ such that
$$|b(x,i)-b(y,i)|+|\sigma(x,i)-\sigma(y,i)|\leq L|x-y|,\  \forall x,y\in\R^n, i\in\N.$$
\item[{(ii)}]  $q_{ij}(\phi)$ is measurable in $\phi\in\C$ for each $(i,j)\in\N^2$. Moreover, for any $H>0$,
      $$M_H:=\sup_{\phi\in\C, \|\phi\|\leq H, i\in\N}\{q_{i}(\phi)\}<\infty.$$
\end{itemize} }
\end{asp}

{
\begin{rem}\rm
We can use either Assumption \ref{asp2.1} or Assumption \ref{asp2.2}
to obtain the existence and uniqueness of solutions to \eqref{e2.3}.
Recall that now $\N$ is a countable set, so care must be taken to distinct it with a finite state case.
In Assumption \ref{asp2.1}, the Lipschitz constants of $b(\cdot, i),\sigma(\cdot,i)$ depend on
$i$, and $q_i(\phi)$ is assumed to be bounded uniformly in $(\phi,i)\in\C\times\Z_+$.
In contrast, the uniform boundedness of $q_i(\phi)$ is
relaxed,
but the Lipschitz constant of $b(\cdot, i),\sigma(\cdot,i)$ is assumed to be
in $i\in\N$.
\end{rem}
}

\begin{thm}\label{thm2.2}
Under Assumption {\rm\ref{asp2.2}}, for each
initial data $(\xi,i_0)$, there exists a unique solution $(X(t),\alpha(t))$ to \eqref{e2.3}.
\end{thm}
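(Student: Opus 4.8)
The plan is to re-run the interlacing construction from the proof of Theorem~\ref{thm2.1} without change: Assumption~\ref{asp2.2}(i) is stronger than \ref{asp2.1}(i), so for each frozen $i$ the diffusion \eqref{e2.4t} still has a unique nonexploding strong solution, and pasting the pieces $\tilde Y^{(k)}$ as in \eqref{e2.7} produces a process $(X(t),\alpha(t))$ on the random interval $[0,\tau_\infty)$ with $\tau_\infty=\lim_k\tau_k$. Uniqueness will follow verbatim as in Theorem~\ref{thm2.1}. Thus everything again reduces to proving $\tau_\infty=\infty$ a.s. The one place where the earlier proof breaks down is the chain of inequalities \eqref{e2.9}: since \ref{asp2.2}(ii) only bounds $q_i(\phi)$ by $M_H$ on $\{\|\phi\|\le H\}$, I can no longer dominate the switching directly by a single Poisson variable. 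The remedy is to control the continuous component first and only then localize the switching rate.

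First I would prove an a priori moment bound for $X$ that is uniform over the switching path. The decisive feature of Assumption~\ref{asp2.2}(i) is that $L$ does not depend on $i$, so that $b(\cdot,i),\sigma(\cdot,i)$ obey a linear growth bound $|b(x,i)|^2\vee|\sigma(x,i)|^2\le K(1+|x|^2)$ with a constant $K$ that can be taken independent of $i$. Set $\beta_H=\inf\{t\ge0:|X(t)|\ge H\}$. Applying It\^o's formula to $|X(t\wedge\tau_k\wedge\beta_H)|^2$ (the pure-jump part $\alpha$ does not appear, since $X$ is continuous), taking expectations to kill the Brownian martingale, invoking the Burkholder--Davis--Gundy inequality for the supremum, and closing with Gronwall's lemma, I obtain
\[
\E\Big[\sup_{0\le s\le t\wedge\tau_k\wedge\beta_H}|X(s)|^2\Big]\le C\big(1+|\xi(0)|^2\big)e^{Ct},
\]
with $C=C(K,n,d)$ \emph{independent of $k$, of $H$, and of the realized sequence $\{i_k\}$}. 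Letting $k\to\infty$ and using Fatou's lemma gives the same bound with $\tau_k$ removed, and Chebyshev's inequality then yields $\PP\{\beta_H\le T\}\le C(1+|\xi(0)|^2)e^{CT}/H^2\to0$, so that $\beta_H\uparrow\infty$ a.s.

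Next I would feed this back into the jump equation. Fix $T>0$ and take $H>\|\xi\|$. On the event $\{\beta_H>T\}$ we have $\|X_t\|\le H$ for every $t\in[0,T]$, so Assumption~\ref{asp2.2}(ii) gives $q_{\alpha(t)}(X_t)\le M_H$ there, uniformly in the state $\alpha(t)$. Repeating the computation \eqref{e2.9} with $M$ replaced by $M_H$ shows that the number of switches in $[0,T]$ is dominated by a Poisson$(M_HT)$ variable and is therefore a.s.\ finite, whence $\tau_\infty>T$ on $\{\beta_H>T\}$. Since $\PP\{\beta_H>T\}\to1$ as $H\to\infty$, we conclude $\PP\{\tau_\infty>T\}=1$ for every $T$, i.e.\ $\tau_\infty=\infty$ a.s. Together with the uniqueness argument carried over from Theorem~\ref{thm2.1}, this proves the theorem.

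I expect the crux to be the uniformity of the moment estimate over the countably infinite state space. Because $\alpha(t)$ can reach arbitrarily large values, any growth constant that depended on $i$ would be worthless once $\alpha$ has wandered high; it is exactly the $i$-independence of the Lipschitz constant in \ref{asp2.2}(i) that makes the Gronwall constant $C$ uniform and thereby decouples the estimate for $X$ from the unbounded excursions of $\alpha$. A secondary technical care is the order of limits: one must localize simultaneously by $\tau_k\wedge\beta_H$, derive the bound uniformly in $k$, and only afterwards send $k\to\infty$, so as not to presuppose the very nonexplosion $\tau_\infty=\infty$ that is being proved.
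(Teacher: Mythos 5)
Your proposal follows essentially the same route as the paper: a second-moment bound $\E\big[\sup_{0\le t\le T\wedge\tau_k}|X(t)|^2\big]\le K(T)$ uniform in $k$ (the paper cites \cite[Lemma 3.2]{XM} where you spell out the It\^o/BDG/Gronwall steps, both hinging on the $i$-independence of the Lipschitz constant), followed by localization at the exit time of $\|X_t\|$ from a ball of radius $H$ so that $q_{\alpha(t)}(X_t)\le M_H$ there and the number of switches is dominated by a Poisson$(M_H T)$ variable. The only step you omit is the paper's preliminary reduction to bounded initial data via the truncation method of \cite{GS}, which is needed because the theorem allows $\xi$ to be an arbitrary, possibly non-square-integrable, $\F_0$-measurable random element of $\C$.
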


\begin{proof}
Without loss of generality, we may assume that $(\xi, i_0)$
is bounded,
since we can use the truncation method in \cite[Theorem 3 in \S6]{GS} to obtain the result for general $(\xi, i_0)$ once we have proved for the case $(\xi, i_0)$ being bounded.
Construct the process $(X(t),\alpha(t))$ as in the proof of Theorem \ref{thm2.1}.
We need to show that $\tau_\infty=\infty$ a.s.
Following the proof of \cite[Lemma 3.2, p. 51]{XM}, there is a $K=K(T)$ such that
$$\E\Big(\sup\limits_{0\leq t\leq T\wedge\tau_k}|X(t)|^2\Big)\leq K\,\forall\,k\in\N.$$
As a result, for any $\eps>0$, there is an $H_\eps$ such that
\begin{equation}\label{e2.10}
\PP\{\|X_{t}\|\leq H_\eps\,\forall\,t\in[0,T\wedge\tau_k]\}>1-\dfrac\eps2.
\end{equation}
Let $\eta_{H_\eps}=\inf\{t\geq0: \|X_{t}\|\geq H_\eps\}$
and $M_{H_\eps}=\sup_{\phi\in\C, \|\phi\|\leq H_\eps, i\in\N}\{q_{i}(\phi)\}<\infty.$ Then
\begin{equation}
\begin{aligned}
\PP\{\tau_k\leq T\wedge\eta_{H_\eps}\}=&\PP\big\{\int_0^{T\wedge\tau_k\wedge\eta_{H_\eps}}\int_{\R}\1_{\{z\in[0,q_{\alpha(s-)}(X_s))\}}\p(ds, dz)= k\big\}\\
\leq&\PP\big\{\int_0^{T\wedge\tau_k\wedge\eta_{H_\eps}}\int_{\R}\1_{\{z\in[0,M_{H_\eps})\}}\p(ds, dz)\geq k\big\}\\
\leq&\PP\big\{\int_0^{T}\int_{\R}\1_{\{z\in[0,M_{H_\eps})\}}\p(ds, dz)\geq k\big\}\\
=& e^{-M_{H_\eps} T}\sum_{l=k}^\infty \dfrac{(M_{H_\eps} T)^l}{l!}.
\end{aligned}
\end{equation}
For sufficiently large $k$, we have
\begin{equation}\label{e2.12}
\PP\{\tau_k\leq T\wedge\eta_{H_\eps}\}\leq e^{-M_{H_\eps} T}\sum_{l=k}^\infty \dfrac{(M_{H_\eps} T)^l}{l!}\leq\dfrac\eps2.
\end{equation}
From \eqref{e2.10} and \eqref{e2.12},
$\PP\{\tau_k\geq T\}\geq\PP(\{\tau_k\wedge T<\eta_{H_\eps}\}\cap\{\tau_k> T\wedge\eta_{H_\eps}\})\geq1-\eps$
for sufficiently large $k$.
Thus, we
obtain that $\PP\{\tau_\infty\geq T\}\geq 1-\eps$.
It holds for every $T>0$ and $\eps>0$, so we obtain the desired result.
\end{proof}

\begin{rem}{\rm
To obtain the existence and uniqueness of solutions, Assumptions \ref{asp2.1} and \ref{asp2.2} can be relaxed by replacing the
global Lipschitz conditions with
local Lipschitz conditions together with
Lyapunov-type functions. To be specific,
let  $V(\cdot): \R^n\mapsto\R$ be  twice continuously differentiable in $x$.
For each $i\in\N$, let $\LL_i V(x)=\nabla V(x)b(x,i)+\dfrac12\trace\Big(\nabla^2 V(x)A(x,i)\Big)$.
For instance (1) of Assumption \ref{asp2.1} and (1) of Assumption \ref{asp2.2} can be replaced by the following
Assumptions \ref{asp2.3} and \ref{asp2.4}, respectively.
}\end{rem}

\begin{asp}\label{asp2.3}{\rm Assume the following conditions hold.
\begin{itemize}
  \item[{(i)}] For each $H>0$, $i\in\N$, there is a positive constant $L_{H,i}$ such that
$$|b(x,i)-b(y,i)|+|\sigma(x,i)-\sigma(y,i)|\leq L_{H,i}|x-y|, \ \forall |x|, |y|\leq H, i\in\N.$$
\item[{(ii)}] For each $i\in\N$, there exist a  twice continuously differentiable function $V_i(x)$ and a constant $C_i>0$ such that
$$\lim\limits_{R\to\infty}\Big(\inf\{V_i(x):|x|\geq R\}\Big)=\infty\quad\text{ and }\quad\LL_iV_i(x)\leq C_i(1+V_i(x))\,\forall\,x\in\R^n.$$
\end{itemize} }
\end{asp}

\begin{asp}\label{asp2.4} {\rm Assume the following conditions hold.
\begin{itemize}
  \item[{(i)}] For each $H>0$, $i\in\N$, there is a positive constant $L_{H,i}$ such that
$$|b(x,i)-b(y,i)|+|\sigma(x,i)-\sigma(y,i)|\leq L_{H,i}|x-y|\, \ \forall |x|, |y|\leq H, i\in\N.$$
\item[{(ii)}] There exist a  twice continuously differentiable function $V(x)$ and a constant $C>0$ independent of $i\in\N$ such that
$$\lim\limits_{R\to\infty}\Big(\inf\{V(x):|x|\geq R\}\Big)=\infty\quad\text{ and }\quad\LL_iV(x)\leq C(1+V(x))\,\forall\,x\in\R^n, i\in\N.$$
\end{itemize}}
\end{asp}

\begin{thm}\label{thm2.3}
For  given initial data $(\xi,i_0)$, there exists a unique solution $(X(t),\alpha(t))$ to \eqref{e2.3} if either of the following conditions  is satisfied
\begin{itemize}
\item Assumption {\rm\ref{asp2.3}} and {\rm (ii)} of Assumption {\rm\ref{asp2.1}},
\item Assumption {\rm\ref{asp2.4}} and {\rm(ii)} of Assumption {\rm\ref{asp2.2}}.
\end{itemize}
\end{thm}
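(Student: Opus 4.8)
The plan is to reuse the interlacing construction from the proofs of Theorems~\ref{thm2.1} and~\ref{thm2.2}, noting that there are now \emph{two} independent sources of explosion to rule out: the continuous component $X(\cdot)$ may blow up to infinity in finite time (we assume only \emph{local} Lipschitz continuity), and the switching times $\tau_k$ may accumulate at a finite $\tau_\infty$. Hypothesis (i) of Assumption~\ref{asp2.3} (equivalently (i) of Assumption~\ref{asp2.4}) still yields, for each fixed $i\in\N$, a unique strong solution of \eqref{e2.4t} up to its explosion time; the role of the Lyapunov conditions (ii) is precisely to push that explosion time to $+\infty$. I would therefore first establish non-explosion of the continuous part by a Khasminskii-type argument, and then recover $\tau_\infty=\infty$ as before.

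Consider first Assumption~\ref{asp2.3} together with (ii) of Assumption~\ref{asp2.1}. For each fixed $i$, apply $\LL_i$ to $V_i$ and use $\LL_iV_i(x)\le C_i(1+V_i(x))$ with $\inf_{|x|\ge R}V_i(x)\to\infty$: the standard Lyapunov/Dynkin estimate on $V_i$ evaluated along the fixed-$i$ diffusion started from a finite $\F_{\tau_k}$-measurable value, followed by Gronwall and $R\to\infty$, shows each segment $\tilde Y^{(k)}(\cdot)$ is a global (non-exploding) solution. Since (ii) of Assumption~\ref{asp2.1} supplies the uniform bound $M=\sup_{\phi,i}q_i(\phi)<\infty$, the number of switches before $T$ is dominated by a Poisson$(MT)$ variable, so the computation \eqref{e2.9} applies verbatim and gives $\PP\{\tau_k\le T\}\to0$, hence $\tau_\infty=\infty$ a.s. Because $\tau_k\to\infty$, on every $[0,T]$ only finitely many switches occur, so $X(\cdot)$ is a finite concatenation of non-exploding pieces, hence well defined and continuous; uniqueness follows as in Theorem~\ref{thm2.1}.

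The case of Assumption~\ref{asp2.4} with (ii) of Assumption~\ref{asp2.2} is the substantive one, and the key observation is that the Lyapunov function $V$ there is \emph{independent of $i$} (and may be taken nonnegative). Consequently the jump part of $\LL$ annihilates it: $\sum_{j\ne i}q_{ij}(\phi)\big[V(\phi(0))-V(\phi(0))\big]=0$, so $\LL V(\phi,i)=\LL_iV(\phi(0))\le C(1+V(\phi(0)))$ with $C$ uniform in $i$. Localizing by $\beta_R=\inf\{t:|X(t)|\ge R\}$ and applying the It\^o/Dynkin formula behind \eqref{e2.4-a} to $V$ on $[0,t\wedge\tau_k\wedge\beta_R]$, the (local) martingale terms vanish in expectation after the localization, giving $\E V(X(t\wedge\tau_k\wedge\beta_R))\le V(\xi(0))+C\int_0^t\big(1+\E V(X(s\wedge\tau_k\wedge\beta_R))\big)ds$; Gronwall bounds the left side by $(1+V(\xi(0)))e^{Ct}-1$, uniformly in $k$ and $R$. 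Since $X$ is continuous and $\inf_{|x|\ge R}V(x)\to\infty$, this forces $\PP\{\beta_R\le T\wedge\tau_k\}\to0$, so $X$ does not explode on $[0,T\wedge\tau_k]$, uniformly in $k$. This is exactly what produces $H_\eps$ with $\PP\{\|X_t\|\le H_\eps\ \forall t\in[0,T\wedge\tau_k]\}>1-\eps/2$ as in \eqref{e2.10}; from there the local bound $M_{H_\eps}=\sup_{\|\phi\|\le H_\eps,i}q_i(\phi)<\infty$ drives the Poisson comparison \eqref{e2.12} and yields $\tau_\infty=\infty$, completing the construction.

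The main obstacle I anticipate is the second case, namely decoupling the two explosion mechanisms: the Poisson comparison controlling the switching works only on $\{\|X_t\|\le H\}$ where $M_H$ is finite, while the Lyapunov estimate controlling $X$ must be proved \emph{before} we know the switching does not explode and must be uniform in the truncation index $k$ and radius $R$. What makes this go through---and what dictates the uniform-in-$i$ form of Assumption~\ref{asp2.4}---is the vanishing of the jump part of $\LL V$; without an $i$-independent $V$ the term $\sum_{j\ne i}q_{ij}(\phi)[V_j(\phi(0))-V_i(\phi(0))]$ need not be controllable, which is exactly why the first case instead pays for $i$-dependent $V_i$ with the global bound $M$ on the jump rates.
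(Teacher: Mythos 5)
Your proposal is correct and follows essentially the same route as the paper: case one reduces to the non-explosion of each fixed-$i$ diffusion via the Khasminskii test plus the verbatim Poisson comparison of \eqref{e2.9}, and case two localizes $V$ at $T\wedge\tau_k\wedge\eta_H$, applies the generalized It\^o formula and Gronwall to get a bound uniform in $k$ and $H$, and then feeds the resulting $H_\eps$ into the argument of Theorem~\ref{thm2.2}. Your explicit remarks---that the $i$-independence of $V$ makes the jump part of $\LL V$ vanish, and that the Lyapunov estimate must be obtained uniformly in $k$ \emph{before} the switching is controlled---are exactly the points the paper uses implicitly, so no gap remains.
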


\begin{proof}
It is well known that  Assumption \ref{asp2.3} guarantees the existence and uniqueness of solutions to
\eqref{e2.4t}. Hence, if
(ii) in Assumption \ref{asp2.1} is satisfied, we can prove
the desired result by
using the proof of Theorem \ref{thm2.1}.
Now, suppose  Assumption \ref{asp2.4} and
(ii) of Assumption \ref{asp2.2} hold.
Similar to
 the proof of Theorem \ref{thm2.2}, we can assume that $(\xi, i_0)$ is bounded.
Consider $X(t)$ and define $\tau_k$ as in the proof of Theorem \ref{thm2.1}.
Then $X(t)$ is the solution with initial data $(\xi,i_0)$ to \eqref{e2.3} on $[0,T\wedge\tau_k)$ for any $T>0, k\in\N$.
We have from the generalized It\^o formula that
$$
\begin{aligned}
\E V(X(T\wedge\tau_k\wedge\eta_H))& = \E V(\xi(0),i_0)+\E\int_0^{T\wedge\tau_k\wedge\eta_H}\LL_{i} V(X(t), \alpha(t-))dt\\
&\leq \E V(\xi(0),i_0)+C\E\int_0^{T\wedge\tau_k\wedge\eta_H}(1+V(X(t))dt,
\end{aligned}
$$
where $\eta_H=\inf\{t\geq0: |X(t)|>H\}$.
Using the estimate above and the argument in \cite[Theorem 3.19]{MY}, we can show that
$$\E V(T\wedge\tau_k\wedge\eta_H)\leq K=K(\xi,T)\,\forall H>0, k\in\N.$$
In view of the property $\lim\limits_{R\to\infty}\Big(\inf\{V(x):|x|\geq R\}\Big)=\infty$, for any $\eps>0$, there is $H_\eps>0$ such that
$$\PP\{\eta_{H_\eps}>T\wedge\tau_k\}>1-\dfrac\eps2\,\,\forall\, k\in\N.$$
Then,
proceeding similarly as in the proof of Theorem \ref{thm2.2} yields the existence and uniqueness of solutions with initial data $(\xi,i_0)$ to \eqref{e2.3}.
\end{proof}

\begin{exm}
\label{exm:2a}{\rm (cont. of Example \ref{exm:2})
We come back to Example \ref{exm:2} in Section \ref{sec:int}.
We want to show that
$X(t)>0$ for all $t\geq0$ under certain conditions. To proceed, we can set $Y(t)=\ln X(t)$ to obtain
\begin{equation}\label{e.lnx}
d Y(t)=[a(\alpha(t)-\dfrac{\sigma^2(\alpha(t))}2-b(\alpha(t))\exp(Y(t))]dt+\sigma(\alpha(t))dW(t).
\end{equation}
To demonstrate  \eqref{ex2.1} and \eqref{ex2.2} has a unique solution with $X(t)>0$ for all $t\geq0$, it is equivalent to show that \eqref{e.lnx} and \eqref{ex2.2}
has a strong solution on $[0,\infty)$.
Let $V(y)=e^y+e^{-y}$.
By direct calculation,
$$
\begin{aligned}
\LL_i V(y)=& b(i)+(\sigma^2(i)-a(i))e^{-y}+a(i) e^{y}-b(i)e^{2y}\\
\leq& c(i)+(\sigma^2(i)-a(i))V(y),
\end{aligned}
$$
where $c(i)=\max\limits_{y\in\R}\{b(i)+(2a(i)-\sigma^2(i))e^{y}-b(i)e^{2y}\}$.
Applying Theorem 3.3, we can see that the equation has a unique solution if one of the following is satisfied
\begin{itemize}
  \item $\beta_i(\phi)+\delta_i(\phi)$ is bounded uniformly in $\phi\in \C_+:=\{\psi\in \C: \psi(t)>0\,\forall t\in[-r,0]\}$ and $i\in\Z_+$.
  \item $c(i)$ and $\sigma^2(i)-a(i)$ are bounded above uniformly and for each $i\in\Z_+$, $\beta_i(\phi)+\delta_i(\phi)$ is bounded in each compact subset of $\phi\in \C_+$.
\end{itemize}
It can be shown by applying the result of the next section that  the
process $(Y_t, \alpha(t))$ has the Markov-Feller property if $\beta_i(\cdot)$ and $\delta_i(\cdot)$ are continuous in addition to one of the above conditions.
}\end{exm}

\begin{exm}\label{exm:3a} {\rm (cont. of Example \ref{exm:3})
To study the long-run average control problem in Example \ref{exm:3}, it is important to make sure that the system under consideration processes ergodicity.
Before the ergodicity can be verified, we need \eqref{eq:sys} has a unique solution for each initial condition.
Denote the control set by $\wdt K$ and assume it is a compact and convex set.
Using a relaxed control representation $m_t\cd$ (see \cite{Kushner90}) to represent the consumption rate
$c\cd$, we can rewrite \eqref{eq:sys} as
\begin{equation}\label{eq:sys-1}
 dX(t)= \Big[ \int_{\wdt K} c(u) m_t(du)  - \rho(\al(t) ) X(t) \Big] dt +\sigma(X(t),\al(t)) dW(t).\end{equation}
Assume that for each $i\in \N$, $\sigma(x,i)$ satisfies the conditions in
Assumption \ref{asp2.1} (i), and $Q(\phi)$ satisfies Assumption \ref{asp2.1} (ii).
Then the conditions of Theorem \ref{thm2.1} are all verified.
As a result, \eqref{eq:sys-1} has a unique solution for each initial condition.
}\end{exm}

\section{Markov and Feller Properties}\label{sec:mar}
{ This section establishes the Markov and Feller properties of the process $(X_t,\alpha(t))$.
While the Markov property can be derived by the well-known arguments, it requires much more efforts to obtain
the Feller property.
As already seen in the previous section, the past dependence and the use of $\N$ make the analysis more complex than that of the switching diffusions with diffusion-dependent switching living in a finite set. To overcome the difficulties, in this section, we carry out the analysis by introducing some auxiliary or intermediate processes.
First, it would be better if we could untangle the past dependence of the switching process and the infinity of the cardinality of its state space. For this purpose, we introduce a continuous-time Markov chain independent of the past and continuous state; we call this process $\gamma(t)$. Then naturally, associated with $\gamma(t)$,
we examine a pair of process $(Z(t),\gamma(t))$. Even after this introduction, in the analysis, we still need to look into the details of the switching process $\alpha(t)$ such as when it jumps and the post jump location etc. To do so, we introduce another auxiliary process $Y(t)$, which is a ``fixed''-$i$ process.
We then have another pair of processes $(Y(t),\beta(t))$ to deal with.  These auxiliary processes help us to establish the desired results.
Their connections and interactions will be further specified in what follows.}

First, note that the Brownian motion and the Poisson point process associated to $\p(dt,dz)$ possess stationary strong Markov property, that is,
for any finite stopping time $\eta$, $\{W^*(t)\}_{t\geq0}=\{W(t+\eta)-W(\eta)\}_{t\geq0}$ is an $\F^*_t$-Brownian motion and
 $\p^*([t,t+s)\times U)=\p([t+\eta,t+s+\eta)\times U)$ is a Poisson random measure with density $dt\times\m(dz)$ (see \cite[Theorem 101]{SR}).
Hence, by standard arguments, we can obtain the following theorem whose proof is omitted.
In fact, the theorem can be proved essentially by imitating the proof in \cite[Chap. 5]{PP}, \cite[Chap. 6]{DA}, or \cite[Chap. 7]{BO}.

\begin{thm}\label{thm3.1}
Assume that
the hypotheses of Theorem {\rm\ref{thm2.1}},  or  Theorem {\rm\ref{thm2.2}}, or Theorem {\rm\ref{thm2.3}}
are satisfied.
Let $(X(t),\alpha(t))$ be a solution to \eqref{e2.3}.
Then $(X_t,\alpha(t))$ is a homogeneous strong Markov process taking value in $\C\times\N$ with transition probabilities
$$P(\phi,i, t, A\times\{j\})=\PP\{X^{\phi,i}_t\in A, \alpha(t)=j\},$$
where $X^{\phi,i}(t)$ is the solution to \eqref{e2.3} with initial data $(\phi,i)\in\C\times\N$.
\end{thm}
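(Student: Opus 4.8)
The plan is to deduce homogeneity and the (strong) Markov property from pathwise uniqueness (Theorems \ref{thm2.1}--\ref{thm2.3}) combined with the stationary strong Markov property of the driving noise recalled just above the statement. First I would record that, by pathwise uniqueness, the solution is a measurable functional of its initial data and of the driving noise, depending on the latter only through its increments after the initial time. Writing $(X^{\phi,i}(t),\alpha^{\phi,i}(t))$ for the solution started from $(\phi,i)$ at time $0$, the interlacing construction in the proof of Theorem \ref{thm2.1} exhibits each piece $\tilde Y^{(k)}$ and each switching time $\tau_k$ as a measurable functional of the Poisson measure $\p$, the Brownian motion $W$, and the previously determined state; composing these yields a measurable map $\Phi$ with $(X^{\phi,i}_t,\alpha^{\phi,i}(t))=\Phi(\phi,i,t,W,\p)$ that depends on $(W,\p)$ only through their restriction to $[0,t]$.

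Next I would establish the flow identity. Since $b,\sigma$ and the intensities $q_{ij}$ carry no explicit time dependence, uniqueness immediately gives time-homogeneity: restarting from $(\phi,i)$ at time $s$ and driving by $(W(s+\cdot)-W(s),\,\p(s+\cdot,\cdot))$ produces, in law, the same process as starting at time $0$. More importantly, for $0\le s\le t$ pathwise uniqueness yields the pathwise identity that the solution on $[s,\infty)$ is the unique solution of the same system with initial datum $(X_s,\alpha(s))\in\C\times\N$ driven by the shifted noise. The point I would emphasize is that the segment $X_s\in\C$ carries exactly the memory the dynamics require: the drift and diffusion see only the current value $X(s)$, while the switching intensities see the whole segment $X_s$, and both are encoded in the state $(X_s,\alpha(s))$.

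The Markov property then follows by conditioning. Fix $s\ge0$ and bounded measurable $g$ on $\C\times\N$, and set $P_\tau g(\phi,i)=\E\,g(X^{\phi,i}_\tau,\alpha^{\phi,i}(\tau))$. Because the shifted noise $(W(s+\cdot)-W(s),\,\p(s+\cdot,\cdot))$ is independent of $\F_s$ and equal in law to $(W,\p)$, combining this independence with the flow identity and the functional representation $\Phi$ gives
\[
\E\big[g(X_t,\alpha(t))\mid\F_s\big]=\big(P_{t-s}g\big)(X_s,\alpha(s)),
\]
which is the Markov property with the claimed transition kernel. For the strong Markov property I would run the identical argument with $s$ replaced by a finite stopping time $\eta$, now invoking the stationary strong Markov property of $W$ and $\p$ stated above: the post-$\eta$ noise is independent of $\F_\eta$ and equal in law to $(W,\p)$, so the $\F_\eta$-conditional law of $(X_{\eta+\tau},\alpha(\eta+\tau))$ is $P_\tau(\cdot\,;X_\eta,\alpha(\eta))$.

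The hard part is the bookkeeping at the level of the function-valued segment process on the non-locally-compact space $\C\times\N$, together with tracking the countably many Poisson-driven switching times $\tau_k$ when verifying the flow identity. Once one checks that the interlacing construction produces a version that is genuinely a measurable functional of $(\phi,i)$ and of $(W,\p)$---so that shifting the noise and restarting from $(X_s,\alpha(s))$ reproduces the tail of the trajectory---everything reduces to the standard template of \cite[Chap.~5]{PP}, \cite[Chap.~6]{DA}, or \cite[Chap.~7]{BO}, and homogeneity is immediate from the time-independence of the coefficients.
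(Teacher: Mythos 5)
Your proposal follows exactly the route the paper intends: the authors omit the proof, citing only the stationary strong Markov property of $W$ and $\p$ and referring to the standard template in \cite[Chap.~5]{PP}, \cite[Chap.~6]{DA}, \cite[Chap.~7]{BO}, which is precisely the pathwise-uniqueness-plus-noise-shift argument you spell out. Your version is correct and, if anything, more explicit than the paper about the functional representation and the role of the segment $X_s$ as the Markov state.
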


We proceed with  obtaining the Feller property of $(X_t,\alpha(t))$. Assuming that
the hypotheses of Theorem \ref{thm2.1}, or Theorem \ref{thm2.2}, or Theorem \ref{thm2.3}
are satisfied leads to the existence and uniqueness of strong solutions.
Next, we introduce an auxiliary hybrid diffusion with Markov switching.
Let $\gamma^i(t)$ be a Markov chain starting at $i$ with generator $\tilde Q=(\rho_{ij})$ for $(i,j) \in {\N\times\N}$,
where $\rho_{ii}=-1$  and $\rho_{ij}=2^{-j}$ if $j<i$ and $\rho_{ij}=2^{-j+1}$ if $j>i$, that is,
$$
\tilde Q=\left(\begin{array}{cccc}
-1&1/2&1/4&\cdots\\
1/2&-1&1/4&\cdots\\
1/2&1/4&-1&\cdots\\
\vdots&\vdots&\vdots&\ddots\\
\end{array}\right).
$$
{
We recursively define a sequence of stopping times $\{\theta^i_k\}$ with $\theta^i_k$ being the first jump time of $\gamma^i(t)$ after $\theta^i_{k-1}$ as follows}
$$\theta_0^i=0, \ \theta_{k}^i=\inf\{t>\theta_{k-1}^i: \gamma^i(t)\ne\gamma^i(\theta_{k-1}^i)\}, k\in\N.$$
For $(\phi,i)\in\C\times\N$, let
$Z^{\phi,i}(t)$ be the solution to
$$dZ(t)=b(Z(t),\gamma(t))dt+\sigma(Z(t),\gamma(t))dW(t)\,,t\geq0$$
satisfying $Z^{\phi,i}(t)=\phi(t)$ in $[-r,0]$ and $\gamma(0)=i$.
{
Similar to Girsanov's theorem, which tells us how to convert an It\^o process
to a Brownian motion under a change of measure,
we aim to establish a change of measure allowing us
to ``convert" a hybrid diffusion with past-dependent switching to a hybrid diffusion with Markov switching.
To establish such a change of measure,
we need to find the distribution of jump times of $\alpha(t)$.
Because of the
interactions between $\alpha(t)$ and $X(t)$,
we need to introduce another auxiliary (or intermediate) process, which helps to examine the distribution
of the jump times of $\alpha(t)$.}
Let $(Y^{\phi,i}(t),\beta^{\phi,i}(t))$
be the solution to
\begin{equation}\label{e.3}
\begin{cases}
dY(t)=b(Y(t), i)dt+\sigma(Y(t),i)dW(t),\ t\geq0 \\
d\beta(t)=\disp\int_{\R}h(Y_t,\beta(t-), z)\p(dt, dz),t\geq0\end{cases}
\end{equation}
satisfying
 $Y^{\phi,i}(t)=\phi(t)$ in $[-r,0]$ and $\beta^{\phi,i}(0)=i$.
{
By the definition,
$\alpha^{\phi,i}(t)=\beta^{\phi,i}(t), X^{\phi,i}(t)=Y^{\phi,i}(t)$ up to the first jump time of the two process
$\alpha(t)$ and $\beta(t)$.
There is an advantage working with $(Y^{\phi,i}(t),\beta^{\phi,i}(t))$.
Unlike the pair $(X(t),\alpha(t))$ in which $\alpha(t)$ depends on the continuous state,
the process $Y^{\phi,i}(t)$ evolving for a fixed discrete state $i$ that does not depend on $\beta^{\phi,i}(t)$.
Thus, it is easier to examine, for example, the first jump time of $\beta^{\phi,i}(t)$ (or $\alpha^{\phi,i}(t)$).
}

{ Next we recursively define sequences of stopping times associated with $\beta(t)$ and $\al(t)$ so that
$\lambda^{\phi, i}_k$ and $\tau^{\phi,i}_k$ are the first jump times
of the processes $\beta^{\phi, i}(t)$ and $\al^{\phi, i}(t)$
after $\lambda^{\phi, i}_{k-1}$ and $\tau^{\phi,i}_{k-1}$, respectively. More specifically, for $k\in \N$,}
let $$\lambda_0^{\phi,i}=0, \lambda_k^{\phi,i}=\inf\{t>\lambda_{k-1}^{\phi,i}: \beta^{\phi,i}(t)\ne \beta^{\phi,i}(\lambda_{k-1}^{\phi,i})\},\ i\in\N.$$
and
$$\tau_0^{\phi,i}=0,\ \tau_k^{\phi,i}=\inf\{t>\tau_{k-1}^{\phi,i}: \alpha^{\phi,i}(t)\ne \alpha^{\phi,i}(\tau_{k-1}^{\phi,i})\},\ i\in\N.$$
To simplify the notation, we put
$$\alpha^{\phi,i}_{k}:=\alpha^{\phi,i}(\tau^{\phi,i}_k), \ \beta^{\phi,i}_{k}:=\beta^{\phi,i}(\lambda^{\phi,i}_k), \
\gamma^{i}_{k}:=\gamma^{i}(\theta^{i}_k),$$
and
$$X^{\phi,i}_{(k)}:=X^{\phi,i}_{\tau^{\phi,i}_k}, \ Y^{\phi,i}_{(k)}:=Y^{\phi,i}_{\lambda^{\phi,i}_k}, \ Z^{\phi,i}_{(k)}:=Z^{\phi,i}_{\theta^{i}_k},$$
where we use the subscript $k$ with parentheses to avoid confusion with
the function-valued processes $X^{\phi,i}_t, Y^{\phi,i}_t,  Z^{\phi,i}_t$ at $t=k$.

\begin{lm}\label{lm3.0}
Let $g:\C\times\R_+\times\N\mapsto\R$ be a bounded and measurable function, and $\F^W_T$ be the $\sigma$-algebra generated by $\{W(t), t\in[0,T]\}$.
The following assertions hold:
\begin{enumerate}[{\rm(i)}]
\item $\PP\big(\{\lambda_1^{\phi,i}> t\}\big|\F^W_T\big)=\E\Big[\1_{\{\lambda_1^{\phi,i}> t\}}\Big|\F^W_T\Big]=\exp\big(-\disp\int_0^tq_i(Y^{\phi,i}_s)ds\big)\, \ \forall\,t\in[0,T].$
\item $\E\Big[g(Y^{\phi,i}_{(1)},\lambda_1^{\phi,i},\beta^{\phi,i}_1)\1_{\{\lambda_1^{\phi,i}\leq T\}}\Big|\F^W_T\Big]=\disp\sum_{j=1, j\ne i}^\infty\int_0^Tg(Y_t,t,j)q_{ij}(Y_t)\exp(-\int_0^tq_{i}(Y_s)ds)dt.$
\end{enumerate}
\end{lm}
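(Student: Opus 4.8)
The plan is to exploit the special structure of the auxiliary pair $(Y^{\phi,i},\beta^{\phi,i})$: because the continuous part $Y^{\phi,i}$ solves the fixed-$i$ diffusion $dY=b(Y,i)dt+\sigma(Y,i)dW$, it is a strong solution driven by $W$ alone, hence $\F^W$-adapted, and in particular the whole segment trajectory $\{Y^{\phi,i}_s:s\in[-r,T]\}$ is $\F^W_T$-measurable. Since $\p(dt,dz)$ is independent of $W$, conditioning on $\F^W_T$ freezes $s\mapsto Y^{\phi,i}_s$ into a fixed measurable path while leaving $\p$ a Poisson random measure with unchanged intensity $dt\times\m(dz)$. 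All the randomness governing the first jump of $\beta$ then comes from $\p$ alone, and the problem reduces to a first-point computation for an inhomogeneous Poisson process.

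Second, up to its first jump $\beta^{\phi,i}(s-)=i$, so a switch happens exactly when a point $(s,z)$ of $\p$ satisfies $z\in\cup_{j\ne i}\Delta_{ij}(Y^{\phi,i}_s)=[0,q_i(Y^{\phi,i}_s))$. Restricting $\p$ to the (conditionally deterministic) time-varying set $\{(s,z):z\in[0,q_i(Y^{\phi,i}_s))\}$ yields, given $\F^W_T$, a Poisson random measure whose time-marginal $N(t)=\int_0^t\int_\R\1_{\{z\in[0,q_i(Y^{\phi,i}_s))\}}\p(ds,dz)$ is an inhomogeneous Poisson process with intensity $q_i(Y^{\phi,i}_s)$, and $\lambda_1^{\phi,i}=\inf\{t:N(t)\ge1\}$. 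Assertion (i) is then the void probability $\PP(\lambda_1^{\phi,i}>t\mid\F^W_T)=\PP(N(t)=0\mid\F^W_T)=\exp(-\int_0^t q_i(Y^{\phi,i}_s)ds)$, where measurability of $s\mapsto q_i(Y^{\phi,i}_s)$ follows from the measurability hypotheses on $q_{ij}$ in Assumption \ref{asp2.1}(ii) / \ref{asp2.2}(ii).

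Third, for (ii) I would use the marked-Poisson description of the first point. Conditionally on $\F^W_T$, the first point of $N$ occurs at time $t$ with density $q_i(Y^{\phi,i}_t)\exp(-\int_0^t q_i(Y^{\phi,i}_s)ds)$, and, given that time, its mark $z$ is uniform on $[0,q_i(Y^{\phi,i}_t))$; hence it lands in $\Delta_{ij}(Y^{\phi,i}_t)$, forcing $\beta^{\phi,i}_1=j$ and $Y^{\phi,i}_{(1)}=Y^{\phi,i}_t$, with probability $q_{ij}(Y^{\phi,i}_t)/q_i(Y^{\phi,i}_t)$. Integrating $g$ against this joint law and cancelling the factor $q_i(Y^{\phi,i}_t)$ gives exactly $\sum_{j\ne i}\int_0^T g(Y_t,t,j)q_{ij}(Y_t)\exp(-\int_0^t q_i(Y_s)ds)dt$. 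Equivalently, (ii) follows from (i) by differentiating the survival function $S(t)=\exp(-\int_0^t q_i(Y_s)ds)$ to read off the jump-time density $-S'(t)=q_i(Y_t)S(t)$ and then applying the thinning probabilities $q_{ij}/q_i$ for the post-jump state.

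The main obstacle is making the conditioning rigorous rather than the algebra. I would need to justify carefully that conditioning on $\F^W_T$ preserves the Poisson law of $\p$ (by independence), that its restriction to the $\F^W_T$-measurable time-dependent region is again Poisson, and that the joint law of (first-point time, mark) is as claimed. The cleanest route is to first establish the identities for indicator test functions $g=\1_{A\times(a,b]\times\{j\}}$ via the exponential / L\'evy-system formula for Poisson random measures, and then extend to all bounded measurable $g$ by a monotone-class argument. Points requiring care are the set $\{q_i(Y_s)=0\}$, where no jump can occur and the ratio $q_{ij}/q_i$ is read as $0$, and the possible unboundedness of $q_i$, which is harmless here because the void-probability identity only involves $\int_0^t q_i(Y_s)ds$ and the test function $g$ is bounded.
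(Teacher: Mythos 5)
Your proposal is correct in substance but follows a genuinely different route from the paper. You work directly with the conditional Poisson structure: since $Y^{\phi,i}$ is the strong solution of the fixed-$i$ diffusion, its path is $\F^W_T$-measurable, and by independence of $\p$ from $W$ you freeze the path and read off $\lambda_1^{\phi,i}$ as the first arrival of an inhomogeneous Poisson process with rate $q_i(Y^{\phi,i}_s)$; then (i) is a void probability and (ii) follows from the first-point density $q_i(Y_t)\exp(-\int_0^t q_i(Y_s)ds)$ together with the uniform-mark/thinning probabilities $q_{ij}/q_i$. The paper instead applies the generalized It\^o formula to indicator-type functions of the discrete component: for (i) this yields, after conditioning on $\F^W_T$, the integral equation $\E[\1_{\{\lambda_1>t\}}|\F^W_T]=1-\int_0^t q_i(Y_s)\E[\1_{\{\lambda_1>s\}}|\F^W_T]\,ds$, hence a linear ODE solved by the exponential; for (ii) it applies It\^o's formula to a function supported on a single post-jump state $j$ and reuses (i) to identify the conditional law of $(\lambda_1,\beta_1)$. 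The It\^o route buys economy: it needs only the martingale property of the compensated Poisson integral and sidesteps the need to justify that the restriction of $\p$ to an $\F^W_T$-measurable time-varying region is again Poisson and that the first point's mark is conditionally uniform --- exactly the steps you correctly flag as the ones requiring care and a monotone-class argument. Your route, once those standard facts are formalized, is more probabilistically transparent (it exhibits the L\'evy-system structure explicitly) and handles both assertions in one stroke, so either proof is acceptable; just be aware that the rigor you defer to the end is where most of the actual work lies, whereas the paper's ODE argument discharges it almost for free.
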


As indicated previously,
it is difficult to estimate the difference of $X^{\phi_1,i}_t$ and $X^{\phi_2,i}_t$ because the
states of $\alpha^{\phi_1,i}(t)$ and $\alpha^{\phi_2,i}(t)$ 
\ may differ significantly due to the continuous state dependence.
In contrast, it is considerably easier to compare $Z^{\phi_1,i}_t$ and $Z^{\phi_2,i}_t$ because of the
continuous-state-dependent switching is replaced by the continuous-state-independent Markov chain.
With help of the intermediate process $(Y(t),\beta(t))$ and
Lemma \ref{lm3.0}, we obtain
the following change of measure formula, which is a bridge to connect the continuous-state-dependent and continuous-state-independent processes.

\begin{prop}\label{prop3.2}
For any $T>0$, let $f(\cdot,\cdot):\C\times\N\mapsto\R$ be a bounded continuous function.
For any $l=0,1,\dots$,  any $i_k\in\N$ with $i_k\ne i_{k+1}$ and $k=1,\dots,l+1$,
and any $(\phi,i)\in\C\times\N$,
\begin{equation}\label{e3.1}
\begin{aligned}
\E\Big[ f&(X^{\phi,i}_T,\alpha^{\phi,i}(T))\1_{\{\tau^{\phi,i}_l\leq T<\tau^{\phi,i}_{l+1}\}}\prod_{k=1}^l\1_{\{\alpha^{\phi,i}_{k}=i_k\}}\Big]\\
=&\exp(T)\E\Big[ f(Z^{\phi,i}_T,i_l)\1_{\{\theta^{i}_l\leq T<\theta^{i}_{l+1}\}}\prod_{k=1}^l\Big(\1_{\{\gamma^{i}_{k}=i_k\}}\dfrac{q_{i_ki_{k+1}}
(Z^{\phi,i}_{(k)})}{\rho_{i_ki_{k+1}}}\Big)
\exp\Big\{-\int_0^Tq_{\gamma^{i}(s)}(Z^{\phi,i}_s)ds\Big\}\Big].
\end{aligned}
\end{equation}
\end{prop}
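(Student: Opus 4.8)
The plan is to prove \eqref{e3.1} by induction on the number of jumps $l$, peeling off the first switch at each stage so as to reduce the level-$l$ identity to the level-$(l-1)$ identity. The two mechanisms driving the argument are the strong Markov property of Theorem~\ref{thm3.1}, used to restart the process after its first jump, and Lemma~\ref{lm3.0}, which describes the conditional law of the first jump of $\alpha^{\phi,i}$ (equivalently of $\beta^{\phi,i}$) given the Brownian path. A key structural fact is that the reference chain $\gamma^i$ has total exit rate $1$ from every state (since $\rho_{ii}=-1$), so its holding times are $\mathrm{Exp}(1)$ and independent of $(W,\p)$; the global prefactor $\exp(T)$ is precisely what absorbs the resulting survival factors.

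For the base case $l=0$, note that on $\{T<\tau^{\phi,i}_1\}$ one has $\alpha^{\phi,i}(T)=i$ and $X^{\phi,i}_t=Y^{\phi,i}_t$ for $t\le T$, because the two systems coincide before the first switch; likewise on $\{T<\theta^i_1\}$ one has $\gamma^i(s)=i$ and $Z^{\phi,i}_t=Y^{\phi,i}_t$ for $t\le T$. By Lemma~\ref{lm3.0}(i) the left side equals $\E\big[f(Y^{\phi,i}_T,i)\exp(-\int_0^Tq_i(Y^{\phi,i}_s)ds)\big]$. On the right side $\theta^i_1\sim\mathrm{Exp}(1)$ is independent of $W$, so $\PP(\theta^i_1>T)=e^{-T}$ cancels the prefactor $\exp(T)$, while $q_{\gamma^i(s)}(Z^{\phi,i}_s)=q_i(Y^{\phi,i}_s)$ on $\{T<\theta^i_1\}$; the two sides agree.

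For the inductive step, write $\Psi_l(\phi,i,T)$ for the left-hand side of \eqref{e3.1} with destinations $i_1,\dots,i_l$. First I would condition on $\F_{\tau^{\phi,i}_1}$ and apply the strong Markov property: the post-jump process is a fresh solution started from $(X^{\phi,i}_{(1)},i_1)$ run over the horizon $T-\tau^{\phi,i}_1$, and the level-$l$ functional factors as $\1_{\{\tau^{\phi,i}_1\le T,\ \alpha^{\phi,i}_1=i_1\}}$ times $\Psi_{l-1}$ evaluated at that initial datum with the relabelled destinations $i_2,\dots,i_l$. Crucially, after this step the continuation is already a \emph{deterministic} function of $(X^{\phi,i}_{(1)},\tau^{\phi,i}_1)=(Y^{\phi,i}_{(1)},\lambda^{\phi,i}_1)$, its future Brownian randomness having been integrated out in the definition of $\Psi_{l-1}$. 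Only then do I condition on $\F^W_T$ and apply Lemma~\ref{lm3.0}(ii) with $g=\1_{\{\beta^{\phi,i}_1=i_1\}}\,\Psi_{l-1}(\,\cdot\,,i_1,T-\,\cdot\,)$; the indicator collapses the sum over $j$ to $j=i_1$ and yields
\[
\Psi_l(\phi,i,T)=\E\Big[\int_0^T\Psi_{l-1}(Y^{\phi,i}_t,i_1,T-t)\,q_{i i_1}(Y^{\phi,i}_t)\exp\Big(-\int_0^tq_i(Y^{\phi,i}_s)ds\Big)dt\Big].
\]
Performing the identical peeling on the right-hand side of \eqref{e3.1} gives the same recursion: $\gamma^i$ jumps to $i_1$ with probability $\rho_{i i_1}$, so the first weight $q_{i i_1}(Z^{\phi,i}_{(1)})/\rho_{i i_1}$ times this probability leaves the bare intensity $q_{i i_1}$; before $\theta^i_1$ one has $Z^{\phi,i}=Y^{\phi,i}$; and the $\mathrm{Exp}(1)$ holding-time density $e^{-t}$ combined with the level-$(l-1)$ prefactor $e^{T-t}$ reconstitutes $e^{T}$. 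The induction hypothesis then closes the argument.

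The main obstacle is the measurability clash just alluded to: Lemma~\ref{lm3.0} conditions on the whole path $\F^W_T$, yet the post-jump continuation is driven by the Brownian increments on $[\lambda^{\phi,i}_1,T]$, which themselves belong to $\F^W_T$. The resolution lies entirely in the order of operations---collapse the continuation into a deterministic functional of the first-jump data via the strong Markov property \emph{before} conditioning on $\F^W_T$---so that Lemma~\ref{lm3.0}(ii) is genuinely applied to a function of $(Y^{\phi,i}_{(1)},\lambda^{\phi,i}_1,\beta^{\phi,i}_1)$. The remaining work is routine bookkeeping: checking that $X^{\phi,i}=Y^{\phi,i}$ and $Z^{\phi,i}=Y^{\phi,i}$ (with their discrete partners) up to the relevant first jump, tracking the exponential weights so the single factor $\exp(T)$ distributes correctly over the $l$ inter-jump intervals, and noting integrability, which is immediate since $\exp(-\int_0^tq_i\,ds)\le 1$ and $f$ is bounded.
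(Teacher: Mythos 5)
Your proposal is correct and follows essentially the same route as the paper: the base case $l=0$ via Lemma \ref{lm3.0}(i) and the identification $X=Y=Z$ before the first jump, then an inductive step that peels off the first switch using the strong Markov property together with Lemma \ref{lm3.0}(ii) to identify the conditional law of $(\tau_1,\alpha_1,X_{(1)})$ with the $q/\rho$-weighted, $e^{t}$-corrected law of $(\theta_1,\gamma_1,Z_{(1)})$. The paper carries out $l=0$ and $l=1$ explicitly and invokes induction for general $l$; your explicit recursion for $\Psi_l$ and the remark on applying the strong Markov property \emph{before} conditioning on $\F^W_T$ are exactly the bookkeeping the paper leaves implicit.
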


\begin{rem}\label{rem:no-u}{\rm
The proofs of Lemma \ref{lm3.0} and Proposition \ref{prop3.2} will be given in the Appendix. We are now in a position to prove the Feller property for the solution to \eqref{e2.3}.
 In addition to the sufficient
 conditions for the existence and uniqueness of solution, we  prove the Feller property of the solution only with an additional condition that $q_{ij}(\phi)$ is continuous in $\phi$ for any $i,j\in\N$. There are some difficulties
because the process $\{X_t\}$ takes value in an infinite dimensional Banach space and the switching $\{\alpha(t)\}$ has an infinite state space.
Moreover, although we suppose that
$q_{ij}(\phi)$ is  continuous,
neither the uniform continuity in $\phi\in\C$ nor equi-continuity in $i,j\in\N$ is assumed.
Because of these difficulties, we divide the proof into several steps. First, we make the following assumptions, which will be relaxed later.}\end{rem}

\begin{asp}\label{asp3.1}{\rm Assume the following conditions hold.
\begin{enumerate}[{\rm(i)}]
\item For each $i\in\N$, $b(x, i)$ and  $\sigma(x,i)$ are Lipschitz continuous functions that are vanishing outside $\{x: |x|\leq R\}$
for some $R>0$.
\item $M:=\sup\{q_{i}(\phi): i\in\N, \phi\in\C\}<\infty$.
\item For each $i,j\in\N, j\ne i$, $q_i(\cdot)$ and $q_{ij}(\cdot)$ are continuous on $\C$.
\end{enumerate}}
\end{asp}

Before applying \eqref{e3.1} to prove the continuous dependence of
$u_f(\phi,i)=\E_{\phi,i} f(X_T,\alpha(T))$ on $(\phi,i)$,
we first need the following lemma.

\begin{lm}\label{lm3.1}
Assume that Assumption {\rm\ref{asp3.1}} is satisfied. Let $(\phi_0,i_0)\in\C\times\N$ with $\|\phi_0\|\leq R$ and $T>0$. For each $\Delta>0$, there exist $m=m(\Delta)\in\N$,  $n_m=n_m(\Delta)\in\N$, and $d_m=d_m(\Delta)>0$ such that
$$\PP\Big(\{\tau_{m+1}^{\phi,i_0}> T\}\cap\{\alpha^{\phi,i_0}(t)\in N_{n_m},\forall t\in[0,T] \}\Big)\geq 1-\Delta,\ \ \forall \|\phi-\phi_0\|<d_m,$$
where $N_k=\{1,\dots,k\}$.
\end{lm}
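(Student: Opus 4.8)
The plan is to bound the complement of the target event by a ``too-many-jumps'' part and a ``range-too-large'' part and to control each separately. Since $\alpha^{\phi,i_0}$ is piecewise constant, on $\{\tau^{\phi,i_0}_{m+1}>T\}$ the event $\{\alpha^{\phi,i_0}(t)\in N_{n_m}\ \forall t\in[0,T]\}$ amounts to $i_0\le n_m$ and $\alpha^{\phi,i_0}_k\le n_m$ for every $k$ with $\tau^{\phi,i_0}_k\le T$. Writing $p_n(\phi):=\PP(\{\tau^{\phi,i_0}_{m+1}>T\}\cap\{\max_{k:\tau^{\phi,i_0}_k\le T}\alpha^{\phi,i_0}_k>n\})$, it suffices to produce $m,n_m,d_m$ with $\PP\{\tau^{\phi,i_0}_{m+1}\le T\}<\Delta/2$ and $p_{n_m}(\phi)<\Delta/2$ for $\|\phi-\phi_0\|<d_m$. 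The jump count is immediate: by Assumption~\ref{asp3.1}(ii), $q_{\alpha(s-)}(X_s)\le M$ for all $s$, so, exactly as in \eqref{e2.9}, the jump times are dominated by a rate-$M$ Poisson process and $\PP\{\tau^{\phi,i_0}_{m+1}\le T\}\le\sum_{l=m+1}^\infty e^{-MT}(MT)^l/l!$, a bound independent of $\phi$ and tending to $0$; I fix $m=m(\Delta)$ making it $<\Delta/2$.

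The substance is the range term, whose difficulty is uniformity in $\phi$: the ball $\{\|\phi-\phi_0\|\le d\}$ is not compact (the space $\C$ is not locally compact), the $q_{ij}$ are only continuous with no equicontinuity in $(i,j)$, and, as stressed after \eqref{e.3}, $X^{\phi,i_0}$ and $X^{\phi_0,i_0}$ cannot be compared directly because their switching parts may differ substantially. I would bound $p_n(\phi)\le\sum_{k=1}^m\PP\{\tau^{\phi,i_0}_k\le T,\ \alpha^{\phi,i_0}_k>n\}$ and show, by induction on $k\in\{1,\dots,m\}$, that each summand can be made smaller than any prescribed $\epsilon$, uniformly for $\phi$ in a (shrinking) neighborhood of $\phi_0$, by choosing $n$ large.

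The inductive step rests on the change of measure \eqref{e3.1}, which replaces $(X^{\phi,i_0},\alpha^{\phi,i_0})$ by $(Z^{\phi,i_0},\gamma^{i_0})$; the point is that $\gamma^{i_0}$ is $\phi$-independent while $Z^{\phi,i_0}$ depends on $\phi$ only through its bounded initial datum, and continuously so---precisely because the switching of $Z$ is the common chain $\gamma^{i_0}$ rather than a $\phi$-dependent one, the usual continuous-dependence estimate applies. Expanding $\PP\{\tau^{\phi,i_0}_k\le T,\ \alpha^{\phi,i_0}_k>n\}$ by \eqref{e3.1} and summing over the intermediate states, I would split on whether all intermediate switching states stay $\le N'$. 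On that event only finitely many $\gamma^{i_0}$-paths occur, the Radon--Nikodym weight is bounded, and the terminal summation collapses to a tail of the form $\sum_{j>n}q_{\gamma^{i_0}_{k-1},j}(Z^{\phi,i_0}_{(k-1)})$ (one must keep $q$ and not bound it by $M$, or the factors $\rho^{-1}$ would diverge); this part is continuous in $\phi$ at $\phi_0$ for fixed $n,N'$ (common $\gamma^{i_0},W$; $Z^{\phi,i_0}\to Z^{\phi_0,i_0}$; continuity of $q_{ij}$; dominated convergence over finitely many paths) and tends to $0$ as $n\to\infty$ (tail of $q_{\gamma^{i_0}_{k-1}}\le M$). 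The complementary event is bounded by $\sum_{j<k}\PP\{\tau^{\phi,i_0}_j\le T,\ \alpha^{\phi,i_0}_j>N'\}$, exactly the quantity already controlled at earlier indices.

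Thus, given $\epsilon$, the inductive hypothesis first fixes $N'$ so the complementary term is $<\epsilon/2$ on a neighborhood, after which the first term is made $<\epsilon/2$ by taking $n\ge N'$ large and shrinking $d$; the base case $k=1$ has no intermediate states and reduces, via Lemma~\ref{lm3.0} applied to the fixed-$i_0$ system \eqref{e.3}, to $\E\int_0^T\sum_{j>n}q_{i_0 j}(Y^{\phi,i_0}_t)\,dt$, which is continuous in $\phi$ and vanishes as $n\to\infty$. Applying the induction to each $k\le m$ with $\epsilon=\Delta/(2m)$ yields a common $n_m$ and $d_m$ with $p_{n_m}(\phi)<\Delta/2$ for $\|\phi-\phi_0\|<d_m$, which together with the jump-count bound gives the claim. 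The main obstacle throughout is this uniformity in $\phi$---passing from pointwise smallness at $\phi_0$ to a full neighborhood in the non-locally-compact $\C$ with no equicontinuity in the switching index---which is exactly what necessitates the change of measure \eqref{e3.1} to the $\phi$-independent chain $\gamma^{i_0}$, the induction on the jump index serving to break the apparent circularity in bounding the intermediate switching states.
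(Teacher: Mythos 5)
Your decomposition into a jump-count part and a range part, the choice of $m$ via \eqref{e2.9}, and the reduction of the per-jump bound to the tails $\sum_{j>n}q_{ij}$ all match the paper, but your treatment of the central difficulty --- uniformity in $\phi$ of the range bound --- is genuinely different. The paper does not invoke the change of measure \eqref{e3.1} in this proof: it uses Kolmogorov--Centsov H\"older estimates for the fixed-$i$ auxiliary diffusions to build, via the merge operation $\uplus$, compact sets $\K^{k}\subset\C$ containing the segments $X_{(k)}$ with high probability, then exploits compactness together with the continuity of $q_i$ and $q_{ij}$ to obtain tail bounds $\sum_{j>n_k}q_{ij}(\psi)<\eps$ that are \emph{uniform} over $\psi$ in a $d_k$-neighborhood of $\K^k$ and over $i\in N_{n_{k-1}}$, and finally feeds these deterministic bounds into Lemma \ref{lm3.0}(ii) and the strong Markov property. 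You instead transfer everything to the $\phi$-independent chain $\gamma^{i_0}$, so that for fixed $n$ and $N'$ the quantity to be controlled is an expectation of a bounded functional of $Z^{\phi,i_0}$ over finitely many $\gamma$-paths; continuity of that expectation at $\phi_0$ (via the $L^2$-estimate of Lemma \ref{lm3.2}) plus monotone vanishing of the tail at $\phi_0$ then gives smallness on a whole neighborhood, with no compactness argument in $\C$ at all. This is a legitimate and arguably lighter route: it trades the H\"older/Arzel\`a--Ascoli machinery for the change of measure plus dominated convergence, and since Lemma \ref{lm3.2} and Proposition \ref{prop3.2} are proved independently of Lemma \ref{lm3.1}, there is no circularity. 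One point needs care: \eqref{e3.1} as stated evaluates functionals at time $T$ with the complete jump history $\{\tau_l\le T<\tau_{l+1},\,\alpha_1=i_1,\dots,\alpha_l=i_l\}$ prescribed, whereas your event $\{\tau_k\le T,\ \alpha_k>n,\ \alpha_j\le N'\ (j<k)\}$ does not constrain the path on $(\tau_k,T]$; you need either to sum \eqref{e3.1} over all $l\ge k$ (with the $\rho^{-1}$ factors beyond step $k$ cancelling against the law of $\gamma$ after summation over the later states) or, more cleanly, the stopped version of the identity --- the $k$-fold iterate of \eqref{e3.11} giving the joint law of $(\tau_j,\alpha_j,X_{(j)})_{j\le k}$ --- which follows from exactly the induction used to prove Proposition \ref{prop3.2} but is not literally \eqref{e3.1}. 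With that identity made explicit, your argument goes through.
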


{
This lemma allows us to
confine our attention to
a finite subset of $\N$ (the state space of $\alpha^{\phi,i_0}(\cdot)$) and a finite number of jumps when $\phi$ is close to $\phi_0$.
It is a crucial step in providing some uniform estimates because we
do not
assume the equi-continuity of
$q_{ij}(\cdot)$ in either $i$ or $j$.
Since the switching intensity of $\alpha^{\phi,i_0}(t)$ depends on $X_t^{\phi,i_0}$,
in order to obtain Lemma \ref{lm3.1},
we need to show that with an arbitrarily large probability,
$X_t^{\phi,i_0}, t\in[0,T]$ belongs to a compact set in $\C$ for any $\phi$  sufficiently close to $\phi_0$.
Note that under some suitable conditions,
sample paths of a diffusion process in a finite interval $[0,T]$ are H\"older continuous.
Thus, it is easy to find a compact set in which sample paths of a diffusion process lie with a large probability.
Our arguments rely on this fact.
However,  the initial data $\phi$ of our process $X(t)$ does not always satisfy the H\"older continuity.
Moreover, $X(t)$ depends on the state of $\alpha(t)$.
We therefore need to introduce the following operator, which is motivated by merging trajectories of $X(t)$ at jump times.
For $\A, \B\subset\C$, we define the set of continuous functions that are formed by merging functions in $\A$ and $\B$
as follows.
\begin{align*}
\A\uplus\B:=\A\cup\B\cup\{\psi\in \C: &\exists \psi_1\in A,\psi_2\in B, s\in[0, r]\text{ such that }\\
& \psi(t)=\psi_1(s+t)\,\forall\,t\in[-r,-s], \psi(t)=\psi_2(t+s-r)\,\forall\,t\in[-s,0]\}.
\end{align*}
By virtue of the Arzel\'a-Ascoli theorem, if $\A$ and $\B$ are compact, so is $\A\uplus\B$.		
Using this fact and the H\"older continuity of sample paths of a diffusion process,
we can find a suitable compact set to which $X_t^{\phi,i_0}, t\in[0,T],$ belongs with a large probability
for any $\phi$ which is sufficiently close to $\phi_0$.
Then, Lemma \ref{lm3.1} can be proved.
The details of the proof are postponed to the Appendix.
Now, we point out some nice properties of the diffusion process with Markov switching $(Z(t),\gamma(t))$,
which are useful to compare the sample paths of $Z(t)$ with different initial values.
}

\begin{lm}\label{lm3.2}
Fix $i_0\in\N$. For each $k\in\N$ and $\eps>0$, there is an $\hbar_k=\hbar_k(\eps)>0$
such that
$$\PP\Big\{\sup\limits_{t,s\in[0,T\wedge \iota_k],0<t-s<\hbar_k}\dfrac{|Z^{\phi, i_0}(t)-Z^{\phi,i_0}(s)|}{(s-t)^{0.25}}\leq 4\Big\}>1-\eps\,\forall\,|\phi(0)|\leq R,$$
and
$$\E\Big[\sup\limits_{t\in[0,T\wedge \iota_k]}|Z^{\phi,i_0}-Z^{\psi,i_0}|^2\Big]\leq \bar C|\phi-\psi|^2,$$
where $\iota_k=\inf\{t>0: \gamma^{i_0}(t)>k\}$ and $\bar C$ is some positive constant.
\end{lm}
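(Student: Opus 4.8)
The plan is to use the cutoff $\iota_k$ to reduce the problem to an It\^o diffusion with uniformly bounded and uniformly Lipschitz coefficients, and then to invoke classical moment estimates. On the stochastic interval $[0,T\wedge\iota_k]$ the chain $\gamma^{i_0}(\cdot)$ takes values only in $N_k=\{1,\dots,k\}$ (if $i_0>k$ then $\iota_k=0$ and both assertions are trivial, so assume $i_0\le k$). By Assumption~\ref{asp3.1}(i), for each fixed $i$ the maps $b(\cdot,i),\sigma(\cdot,i)$ are Lipschitz and vanish outside $\{|x|\le R\}$, hence are bounded; taking the maximum over the finitely many indices $i\in N_k$ produces constants $B_k,L_k$ with $|b(x,i)|+|\sigma(x,i)|\le B_k$ and $|b(x,i)-b(y,i)|+|\sigma(x,i)-\sigma(y,i)|\le L_k|x-y|$ for all $x,y\in\R^n$ and all $i\in N_k$. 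This uniform control over the discrete component is exactly what the cutoff buys us and drives both estimates; note also that $Z^{\phi,i_0}$ and $Z^{\psi,i_0}$ are built from the \emph{same} Brownian motion $W$ and the \emph{same} chain $\gamma^{i_0}$, so that in comparisons the coefficients are always evaluated at a common value of $\gamma$.

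For the first estimate I would first derive, from the boundedness of $b,\sigma$ on $[0,T\wedge\iota_k]$ and the Burkholder--Davis--Gundy inequality, a bound $\E\big|Z^{\phi,i_0}(t\wedge\iota_k)-Z^{\phi,i_0}(s\wedge\iota_k)\big|^p\le C_{p,k}\,|t-s|^{p/2}$ valid for every $p\ge2$ and $|t-s|\le T$, where $C_{p,k}$ is \emph{independent of $\phi$} because $B_k$ does not depend on the initial point. Feeding this into the Garsia--Rodemich--Rumsey inequality yields, for any fixed $\delta\in(0.25,1/2)$ and $p$ large, an almost surely finite H\"older seminorm $H_\delta:=\sup\{|Z^{\phi,i_0}(t)-Z^{\phi,i_0}(s)|/|t-s|^{\delta}:s\ne t\in[0,T\wedge\iota_k]\}$ with $\E H_\delta^p\le C$ uniformly over $|\phi(0)|\le R$. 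By Chebyshev's inequality there is a threshold $K$, independent of $\phi$, with $\PP\{H_\delta\le K\}>1-\eps$; then for $0<t-s<\hbar_k$ we have $|Z(t)-Z(s)|/(t-s)^{0.25}\le H_\delta\,(t-s)^{\delta-0.25}\le K\hbar_k^{\delta-0.25}$, so it suffices to pick $\hbar_k$ so small that $K\hbar_k^{\delta-0.25}\le 4$.

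For the second estimate, set $D(t)=Z^{\phi,i_0}(t)-Z^{\psi,i_0}(t)$. Since the noise and the chain are shared, $D$ satisfies $D(t)=\big(\phi(0)-\psi(0)\big)+\int_0^t\big[b(Z^{\phi,i_0},\gamma^{i_0})-b(Z^{\psi,i_0},\gamma^{i_0})\big]du+\int_0^t\big[\sigma(Z^{\phi,i_0},\gamma^{i_0})-\sigma(Z^{\psi,i_0},\gamma^{i_0})\big]dW$. Stopping at $T\wedge\iota_k$, applying Burkholder--Davis--Gundy to the stochastic integral and Cauchy--Schwarz to the drift, and using the uniform Lipschitz bound $L_k$, I obtain $\E\sup_{u\le t\wedge\iota_k}|D(u)|^2\le C|\phi(0)-\psi(0)|^2+C_k\int_0^t\E\sup_{u\le s\wedge\iota_k}|D(u)|^2ds$. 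Gronwall's inequality then gives $\E\sup_{t\le T\wedge\iota_k}|D(t)|^2\le\bar C\,\|\phi-\psi\|^2$ with $\bar C=Ce^{C_kT}$ (using $|\phi(0)-\psi(0)|\le\|\phi-\psi\|$), which is the claim.

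The moment computations and the Gronwall step are routine; the delicate point is \emph{uniformity in the initial datum}. The single threshold $K$ and the single window $\hbar_k$ (and hence the constant $4$) must serve every $\phi$ with $|\phi(0)|\le R$ simultaneously, which is legitimate only because the cutoff $\iota_k$ makes $B_k,L_k$, and therefore the constants $C_{p,k}$, independent of $\phi$. I expect the main obstacle to be packaging the H\"older moment bound into this explicit uniform high-probability statement, i.e.\ checking that the Garsia--Rodemich--Rumsey functional and the resulting tail bound on $H_\delta$ really do not depend on $\phi(0)$; once that is secured, the passage from the $\delta$-seminorm to the prescribed $0.25$-modulus through the factor $(t-s)^{\delta-0.25}$ is immediate.
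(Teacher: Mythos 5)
Your proposal is correct and takes essentially the same route as the paper: localize by $\iota_k$ so that the coefficients are bounded and Lipschitz uniformly over $N_k$ and over $|\phi(0)|\le R$, derive moment bounds on the increments of the stopped process, and deduce the pathwise H\"older estimate from a continuity criterion, with the second inequality obtained by the standard BDG--Gronwall comparison (which the paper delegates to Lemma~2.14 of Yin--Zhu). The only cosmetic difference is in packaging the first estimate: the paper quotes the Kolmogorov--Centsov theorem applied to a sixth-moment bound $\E|Z(t\wedge\iota_k)-Z(s\wedge\iota_k)|^6\le \tilde C_k(t-s)^3$, whereas you use Garsia--Rodemich--Rumsey with a higher H\"older exponent $\delta>0.25$ plus Chebyshev and then shrink $\hbar_k$ to force the constant $4$ --- an equally valid (and arguably more explicit) way to secure the uniformity in $\phi$.
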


\begin{proof}
Since $b(x, i)$ and $\sigma(x, i)$ are Lipschitzian in $x$ uniformly in $N_k$, by standard arguments (e.g., \cite[Theorem 3.23]{MY}),
we can show that
$$\E|x(t\wedge\iota_k)-x(s\wedge\iota_k)|^6<\tilde C_k (t-s)^3, \forall 0\leq s\leq t\leq T.$$
Using the Kolmogorov-Centsov theorem,
we
obtain the first inequality.
The details are similar to the proof of Lemma \ref{lm3.1} in the Appendix.
The second claim
 is proved in the same manner as that of \cite[Lemma 2.14]{YZ}.
\end{proof}

{
Having Lemmas \ref{lm3.1} and \ref{lm3.2}, we are ready to
use the change of measure \eqref{e3.1} to prove
the Feller property of $(X_t,\alpha(t))$ under Assumption \ref{asp3.1}.
}

\begin{prop}\label{prop3.3}
Suppose that Assumption {\rm\ref{asp3.1}} is satisfied.
Let $f(\cdot,\cdot): \C\times\N\mapsto\R$ be continuous and bounded.
Then for any $T>0$,
$u_f(\phi,i)=\E f(X_T^{\phi,i},\alpha^{\phi,i}(T))$ is a continuous function in $\phi\in\C$.
\end{prop}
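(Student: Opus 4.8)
The plan is to prove continuity of $u_f(\cdot,i_0)$ at an arbitrary $\phi_0$ with $\|\phi_0\|\le R$ (the pertinent range, since $b(\cdot,i),\sigma(\cdot,i)$ vanish for $|x|>R$) by combining the change-of-measure identity of Proposition \ref{prop3.2} with the truncation from Lemma \ref{lm3.1} and the continuous dependence of the auxiliary process from Lemma \ref{lm3.2}. Fix $\Delta>0$ and apply Lemma \ref{lm3.1} to obtain $m,n_m,d_m$ so that, uniformly over $\|\phi-\phi_0\|<d_m$, the switching process $\alpha^{\phi,i_0}$ makes at most $m$ jumps on $[0,T]$ and remains in the finite set $N_{n_m}$ with probability at least $1-\Delta$. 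Since $f$ is bounded, the contribution to $u_f(\phi,i_0)$ of the complementary event is at most $\|f\|_\infty\Delta$. On the good event I would decompose according to the finitely many admissible jump histories: for each $l\in\{0,\dots,m\}$ and each sequence of states in $N_{n_m}$ with consecutive entries distinct, the corresponding piece is precisely a left-hand side of Proposition \ref{prop3.2}. Summing the resulting identities over all such histories expresses the truncated expectation as a finite sum of terms of the form
$$\exp(T)\,\E\Big[f(Z^{\phi,i_0}_T,i_l)\1_{\{\theta^{i_0}_l\le T<\theta^{i_0}_{l+1}\}}\prod_{k=1}^l\Big(\1_{\{\gamma^{i_0}_k=i_k\}}\frac{q_{i_ki_{k+1}}(Z^{\phi,i_0}_{(k)})}{\rho_{i_ki_{k+1}}}\Big)\exp\Big\{-\int_0^Tq_{\gamma^{i_0}(s)}(Z^{\phi,i_0}_s)\,ds\Big\}\Big].$$

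The decisive point is that the chain $\gamma^{i_0}$ is independent of the continuous state, so the jump times $\theta^{i_0}_k$ and the indicators $\1_{\{\gamma^{i_0}_k=i_k\}}$, $\1_{\{\theta^{i_0}_l\le T<\theta^{i_0}_{l+1}\}}$ carry no $\phi$-dependence; only $Z^{\phi,i_0}$ does. On the support of these indicators $\gamma^{i_0}$ stays in $N_{n_m}$ up to $T$, whence $T\wedge\iota_{n_m}=T$, and the second estimate of Lemma \ref{lm3.2} gives $\sup_{[0,T]}|Z^{\phi,i_0}-Z^{\phi_0,i_0}|\to0$ in $L^2$, hence in probability and almost surely along a subsequence, as $\phi\to\phi_0$. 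I would then pass to the limit in each of the finitely many terms: continuity of $f$ handles $f(Z^{\phi,i_0}_T,i_l)$; continuity of each $q_{i_ki_{k+1}}$ handles the segment-evaluated weights $q_{i_ki_{k+1}}(Z^{\phi,i_0}_{(k)})$ once one notes that sup-norm convergence of the paths forces convergence of the segments $Z^{\phi,i_0}_{(k)}=Z^{\phi,i_0}_{\theta^{i_0}_k}$; and continuity of each $q_i$ together with dominated convergence in the time integral handles the exponential functional. All integrands are bounded, since on $N_{n_m}$ the weights obey $q_{i_ki_{k+1}}/\rho_{i_ki_{k+1}}\le M\,2^{n_m}$ by Assumption \ref{asp3.1}(ii) and $\rho_{i_ki_{k+1}}\ge 2^{-n_m}$, the exponential is at most $1$, and $f$ is bounded; so bounded convergence applies and each term is continuous at $\phi_0$.

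Combining the two estimates yields $\limsup_{\phi\to\phi_0}|u_f(\phi,i_0)-u_f(\phi_0,i_0)|\le 2\|f\|_\infty\Delta$, and letting $\Delta\downarrow0$ gives continuity at $\phi_0$. The main obstacle is exactly this limit passage: because neither uniform continuity of $q_{ij}(\cdot)$ in $\phi$ nor equicontinuity in $(i,j)$ is assumed, no uniform control over the switching intensities is available, and the argument genuinely hinges on first reducing to the finite set $N_{n_m}$ and to at most $m$ jumps (Lemma \ref{lm3.1}), then exploiting both the $\phi$-independence of $\gamma^{i_0}$ and the pathwise convergence $Z^{\phi,i_0}\to Z^{\phi_0,i_0}$ (Lemma \ref{lm3.2}) so that only finitely many pointwise-continuous functionals $q_i,q_{ij}$ must be controlled under dominated convergence. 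The delicate bookkeeping is to sample the segments $Z^{\phi,i_0}_{(k)}$ at the random times $\theta^{i_0}_k$ while keeping these times, and the chain's trajectory, entirely decoupled from $\phi$.
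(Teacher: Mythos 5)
Your proposal is correct and shares the paper's overall architecture --- truncation to at most $m$ jumps within the finite set $N_{n_m}$ via Lemma \ref{lm3.1}, conversion to the Markov-modulated pair $(Z,\gamma)$ via Proposition \ref{prop3.2}, and comparison of $Z^{\phi,i_0}$ with $Z^{\phi_0,i_0}$ via Lemma \ref{lm3.2} --- but the finishing step is genuinely different. The paper makes the limit passage quantitative: it constructs a compact set $\tilde\K=\{\phi_0\}\uplus\tilde\h$ from a H\"older ball, uses the first (Kolmogorov--\v{C}entsov) estimate of Lemma \ref{lm3.2} to ensure the segments $Z^{\phi_0,i_0}_t$ lie in $\tilde\K$ with high probability, invokes uniform continuity of $f$ and of the finitely many $q_{ij}$ on the compact $\tilde\K$ to get $\eps$-closeness on a good event $D^\phi_m$, and then tracks explicit constants to choose $\eps$ in terms of $\Delta$. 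You instead exploit the fact that, after the change of measure, the jump times $\theta^{i_0}_k$ and all indicators are $\phi$-free, each of the finitely many integrands is uniformly bounded (each weight by $M2^{n_m}$, the exponential by $1$), and $\sup_{[0,T\wedge\iota_{n_m}]}|Z^{\phi,i_0}-Z^{\phi_0,i_0}|\to 0$ in $L^2$ by the second estimate of Lemma \ref{lm3.2}; passing to an a.s.-convergent subsequence and applying bounded convergence termwise (with a sub-subsequence argument for sequential continuity in the metric space $\C$) gives the limit using only pointwise continuity of $f$ and $q_{ij}$. Your route is more elementary --- it dispenses with the H\"older-ball and $\uplus$-compactness machinery and with the first half of Lemma \ref{lm3.2} --- at the cost of not producing an explicit modulus $d^*(\Delta)$; for the Feller property itself that quantitative information is not needed, so both arguments are sound and your bookkeeping of the finite sum over jump histories is, if anything, cleaner than the paper's.
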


\begin{proof}
We suppose without loss of generality that $|f(\phi,i)|\leq 1\,\forall\,(\phi,i)\in\C\times\N$.
Fix $(\phi_0,i_0)\in\C\times\N$.
We show that for any $\Delta>0$, there exists $d^*=d^*(\Delta,\phi_0,i_0)>0$ such that
\begin{equation}\label{e3.18}
\big|\E f(X_T^{\phi,i_0},\alpha^{\phi,i_0}(T))-\E f(X_T^{\phi_0,i_0},\alpha^{\phi_0,i_0}(T))\big|\leq3\Delta\,\forall\,\|\phi-\phi_0\|<d^*
.\end{equation}
In view of Lemma \ref{lm3.1}, there are $m$, $n_m\in\N$, and $d_m>0$ such that
\begin{equation}\label{e3.19}
\PP\Big(\{\tau_{m+1}^{\phi,i_0}> T\}\cap\{\alpha^{\phi,i_0}(t)\in N_{n_m},\forall t\in[0,T] \}\Big)\geq 1-\Delta\,\forall \|\phi-\phi_0\|<\dfrac{d_m}2.
\end{equation}
Let $\eps=\eps(\Delta)>0$ (to be specified later).
Let $\hbar_k$ be as in  Lemma \ref{lm3.2}. Denote
$$\tilde\h=\Big\{\psi(\cdot)\in \C: \|\psi\|\leq R+1 \text{ and } \sup\limits_{t,s\in[-r,0],0<t-s<\hbar_{n_m}}\dfrac{|\psi(s)-\psi(t)|}{(s-t)^{0.25}}\leq 4\Big\}$$
and $\tilde \K=\{\phi_0\}\uplus\tilde\h$. 
By the compactness of $\tilde K$, there is  a $\tilde d_m>0$ such that
\begin{equation}\label{e3.20}
\|q_{ij}(\psi)-q_{ij}(\phi)\|<\eps, \ |f(\psi,i)-f(\phi, i)|<\eps
\end{equation} if $\phi\in\tilde \K, i, j\in N_{n_m}$ and $\|\psi-\phi\|<\tilde d_m$.
In view of Lemma \ref{lm3.2}, we can choose $\hat d_m>0$ such that
\begin{equation}\label{e3.20a}
\PP\Big\{\sup\limits_{t\in[0,T\wedge \iota_k]}\|Z^{\phi,i_0}_t-Z^{\phi_0,i_0}_t\|\leq \tilde d_m\Big\}<\eps \text{ if } \|\phi-\phi_0\|\leq\hat d_m.
\end{equation}
Let $A^{\phi}$ be the event $\{\tau^{\phi,i_0}_m\leq T<\tau^{\phi,i_0}_{m+1}, \iota_{n_m}>T\}$
and $l(T)$ be the number of jumps up to time $T$ of $\gamma^{i_0}(t)$.
It follows from Proposition \ref{prop3.2} that
\begin{equation}\label{e3.21}
\begin{aligned}
\E\big[ f&(X^{\phi,i_0}_T,\alpha^{\phi,i_0}(T))\1_{A^{\phi}}\big]-\E\big[ f(X^{\phi_0,i_0}_T,\alpha^{\phi_0,i_0}(T))\1_{A^{\phi_0}}\big]\\
=&\exp(T)\E\bigg[ \1_{\{l(T)<m+1,\iota_{n_m}>T\}}
\Big[g(Z^{\phi,i_0}(\cdot),\gamma^{i_0}(\cdot))-g(Z^{\phi_0,i_0}(\cdot),\gamma^{i_0}(\cdot))\Big]\bigg],
\end{aligned}
\end{equation}
where
$$g(Z^{\phi,i_0}(\cdot),\gamma^{i_0}(\cdot))=f(Z^{\phi,i_0}_T,\gamma^{i_0}(T))\prod_{k=1}^{l(T)}\dfrac{q_{\gamma^{i_0}_k\gamma^{i_0}_{k+1}}(Z^{\phi,i_0}_{(k+1)})}{\rho_{\gamma^{i_0}_k\gamma^{i_0}_{k+1}}}
\exp\Big\{-\int_0^Tq_{\gamma^{i_0}(s)}(Z^{\phi,i_0}_s)ds\Big\}.$$
Let $D_m^\phi$ be the event
$$D_m^\phi:=\Big\{\sup\limits_{t\in[0,T\wedge \iota_k]}\|Z^{\phi,i_0}_t-Z^{\phi_0,i_0}_t\|\leq \tilde d_m\Big\}\cap\Big\{\sup\limits_{t,s\in[0,T\wedge \iota_k],0<t-s<\hbar_{n_m}}\dfrac{|Z^{\phi_0,i_0}(s)-Z^{\phi_0,i_0}(t)|}{(s-t)^{0.25}}\leq 4\Big\}.$$
Using  \eqref{e3.21} and the estimates in \cite[Lemma 2.17]{YZ}, we obtain for $l\geq1$,
$$
\begin{aligned}
\Big|\E\big[& f(X^{\phi,i_0}_T,\alpha^{\phi,i_0}(T))\1_{A^{\phi}_l}\big]-\E\big[ f(X^{\phi_0,i_0}_T,\alpha^{\phi_0,i_0}(T))\1_{A^{\phi_0}_l}\big]\Big|\\
\leq& K \E\Big[\1_{\{\theta^{i_0}_l\leq T<\theta_{l+1}^{i_0}, \iota_{n_m}>T\}}\times\sup_{i\in N_{n_m}}|f(Z^{\phi,i_0}_T,i)-f(Z^{\phi_0,i_0}_T,i)|\Big]\\
&+K\E\Big[\1_{\{\theta^{i_0}_l\leq T<\theta_{l+1}^{i_0}, \iota_{n_m}>T\}}\times\sup_{t\in[0,T], i,j\in N_{n_m}}|q_{ij}(Z^{\phi,i_0}_t)-q_{ij}(Z^{\phi_0,i_0}_t)|\Big]\\
\leq& K \E\Big[\1_{\{\theta^{i_0}_l\leq T<\theta_{l+1}^{i_0}, \iota_{n_m}>T\}}\1_{D_m^\phi}\times\sup_{i\in N_{n_m}}|f(Z^{\phi,i_0}_T,i)-f(Z^{\phi_0,i_0}_T,i)|\Big]\\
&+K \E\Big[\1_{\{\theta^{i_0}_l\leq T<\theta_{l+1}^{i_0}, \iota_{n_m}>T\}}\1_{D_m^\phi}\times\sup_{t\in[0,T], i,j\in N_{n_m}}|q_{ij}(Z^{\phi,i_0}_t)-q_{ij}(Z^{\phi_0,i_0}_t)|\Big]\\
&+2K(M+1)\PP(\Omega\setminus D_m^\phi),
\end{aligned}
$$
where $K$ is a constant depending only on $T, m, n_m.$

Note that if $\omega\in\{\theta_l^{i_0}\leq T<\theta_{l+1}^{i_0}, \iota_{n_m}>T\}\cap D_m^\phi$, then $Z^{\phi_0,i_0}_t\in\tilde\K$ and $\|Z^{\phi,i_0}_t-Z^{\phi_0,i_0}_t\|\leq\tilde d_m\,\forall t\in[0,T]$ which implies in view of \eqref{e3.20}  that
$$\sup_{i\in N_{n_m}}|f(Z^{\phi,i_0}_T,i)-f(Z^{\phi_0,i_0}_T,i)|+\sup_{t\in[0,T], i,j\in N_{n_m}}|q_{ij}(Z^{\phi,i_0}_t)-q_{ij}(Z^{\phi_0,i_0}_t)|<2\eps.$$
On the other hand, Lemma \ref{lm3.2} and \eqref{e3.20a} imply that $$\PP(\Omega\setminus D_m^\phi)\leq 3\eps
\ \hbox{ if } \ \|\phi-\phi_0\|\leq\hat d_m.$$
Hence for $\|\phi-\phi_0\|\leq\hat d_m$, we have  that
\begin{equation}\label{e4.24}
\Big|\E\big[ f(X^{\phi,i_0}_T,\alpha^{\phi,i_0}(T))\1_{A^{\phi}}\big]-\E\big[ f(X^{\phi_0,i_0}_T,\alpha^{\phi_0,i_0}(T))\1_{A^{\phi_0}}\big]\Big|\leq 2K(4+3M)\eps,
\end{equation}
Note that $$\PP\big(\Omega\setminus A^{\phi}\big)=\PP\big(\{\tau^{\phi,i_0}_{m+1}<T\}\cup\{\alpha^{\phi,i_0}(t)\notin N_{n_m} \text{ for some } t\in[0,T]\}\big)<\Delta,$$
which implies
\begin{equation}\label{e4.25}
\Big|\E\big[ f(X^{\phi,i_0}_T,\alpha^{\phi,i_0}(T))\1_{\Omega\setminus A^{\phi}}\big]-\E\big[ f(X^{\phi_0,i_0}_T,\alpha^{\phi_0,i_0}(T))\1_{\Omega\setminus A^{\phi}}\big]\Big|\leq2\Delta.
\end{equation}
Choosing $\eps=\dfrac{\Delta}{2K(4+3M)}$, we have from \eqref{e4.24} and \eqref{e4.25} that
$$\Big|\E\big[ f(X^{\phi,i_0}_T,\alpha^{\phi,i_0}(T))\big]-\E\big[ f(X^{\phi_0,i_0}_T,\alpha^{\phi_0,i_0}(T))\big]\Big|\leq3\Delta$$
if $\|\phi-\phi_0\|<d^*:=\dfrac{d_m}2\wedge \hat d_m$.
\end{proof}
With the above technical preparations,
we are now in a position to prove the main theorem of this section.
{ By using truncation arguments, we can obtain the Feller property of $(X_t,\alpha(t))$
even if
$b(\cdot, i), \sigma(\cdot,i)$ do not vanish outside a bounded region
and
$q_{i}(\phi)$ is not bounded.
The precise condition is given below.}

\begin{thm}\label{thm3.2}
Let either Assumption \ref{asp2.1} or Assumption {\ref{asp2.2}} be satisfied.
Assume further that $q_{ij}(\cdot)$ is a continuous function for any $i, j\in\N$.
Then the solution to \eqref{e2.3} has the Feller property.
\end{thm}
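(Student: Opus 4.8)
The plan is to deduce the general Feller property from the special case already established in Proposition \ref{prop3.3}, via a spatial truncation of the coefficients together with a truncation of the switching intensities. Since $\N$ carries the discrete topology, a function on $\C\times\N$ is continuous exactly when it is continuous in $\phi$ for each fixed $i$; hence it suffices to show, for every bounded continuous $f$ (WLOG $|f|\leq1$), every $i_0\in\N$, and every $T>0$, that $u_f(\phi,i_0)=\E f(X_T^{\phi,i_0},\alpha^{\phi,i_0}(T))$ is continuous at an arbitrary $\phi_0\in\C$. The key observation is that Assumptions \ref{asp2.1} and \ref{asp2.2} differ from Assumption \ref{asp3.1} only in that $b(\cdot,i),\sigma(\cdot,i)$ need not vanish outside a ball and that $q_i$ need not be globally bounded; both discrepancies are purely local and can be removed on a large ball without altering the law of the process up to the first exit from that ball.

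Concretely, I would fix $R>\|\phi_0\|+1$, take a smooth cut-off $\psi_R:\R^n\to[0,1]$ with $\psi_R\equiv1$ on $\{|x|\leq R\}$ and $\psi_R\equiv0$ on $\{|x|\geq R+1\}$, and set $b^R(x,i)=\psi_R(x)b(x,i)$, $\sigma^R(x,i)=\psi_R(x)\sigma(x,i)$; these are Lipschitz in $x$ for each $i$ and vanish outside $\{|x|\leq R+1\}$, so they satisfy Assumption \ref{asp3.1}(i). For the intensities, under Assumption \ref{asp2.1} one has $M=\sup_{i,\phi}q_i(\phi)<\infty$ and keeps $q_{ij}$ unchanged, while under Assumption \ref{asp2.2} one replaces $q_{ij}$ by $q_{ij}^{R+1}(\phi)=q_{ij}(r_{R+1}(\phi))$, where $r_H:\C\to\C$ is the radial retraction onto $\{\|\phi\|\leq H\}$ (continuous, and the identity on $\{\|\phi\|\leq H\}$), so that $\sup_{i,\phi}q_i^{R+1}(\phi)=M_{R+1}<\infty$. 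In either case the truncated intensities remain continuous and bounded, so Assumption \ref{asp3.1} holds for the truncated system. Writing $(X^{R},\alpha^{R})$ for its solution and $u_f^R$ for the corresponding functional, Proposition \ref{prop3.3} gives that $u_f^R(\cdot,i_0)$ is continuous.

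Next I would couple the two systems driven by the same $W$ and $\p$. Since $b^R,\sigma^R$ agree with $b,\sigma$ on $\{|x|\leq R\}$ and the truncated intensities agree with the original ones along any path confined to $\{\|\cdot\|\leq R\}$, pathwise uniqueness forces $X^{R,\phi,i_0}(t)=X^{\phi,i_0}(t)$ and $\alpha^{R,\phi,i_0}(t)=\alpha^{\phi,i_0}(t)$ for all $t\in[0,T]$ on the event $E_R^\phi=\{\sup_{t\in[0,T]}|X^{\phi,i_0}(t)|\leq R\}$ (where $\|\phi\|\leq R$ controls the initial segment). Hence $|u_f(\phi,i_0)-u_f^R(\phi,i_0)|\leq2\,\PP\big((E_R^\phi)^c\big)$, and the triangle inequality yields, for $\|\phi-\phi_0\|\leq1$,
\[
|u_f(\phi,i_0)-u_f(\phi_0,i_0)|\leq|u_f^R(\phi,i_0)-u_f^R(\phi_0,i_0)|+2\PP\big((E_R^\phi)^c\big)+2\PP\big((E_R^{\phi_0})^c\big).
\]
Given $\eps>0$, I would first choose $R$ so large that the two exit terms are $<\eps$, then use the continuity of $u_f^R(\cdot,i_0)$ to make the first term $<\eps$ for $\phi$ near $\phi_0$; letting $\eps\downarrow0$ gives continuity of $u_f(\cdot,i_0)$ at $\phi_0$.

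The main obstacle is the uniform-in-$\phi$ control of $\PP\{\sup_{t\leq T}|X^{\phi,i_0}(t)|>R\}$ as $R\to\infty$, since a single $R$ must handle all nearby $\phi$ at once. Under Assumption \ref{asp2.2} the uniform Lipschitz bound yields a linear-growth estimate and a moment bound $\E\sup_{t\leq T}|X^{\phi,i_0}(t)|^2\leq C(T)(1+\|\phi\|^2)$ independent of $i_0$, so Chebyshev's inequality closes the argument. Under Assumption \ref{asp2.1} the $i$-dependent Lipschitz constants preclude a global moment bound, so instead I would restrict, with probability $\geq1-\eps$ uniformly for $\phi$ near $\phi_0$, to the event that $\alpha^{\phi,i_0}$ makes only finitely many jumps into a finite set $N_n=\{1,\dots,n\}$ on $[0,T]$ (possible because $\sup_i q_i=M<\infty$ bounds the total jump rate), on which only finitely many diffusion regimes are active and the per-$i$ estimates combine into the needed bound. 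This confinement step, which parallels Lemma \ref{lm3.1}, is the delicate part; everything else is a routine localization.
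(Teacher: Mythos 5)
Your proposal follows essentially the same route as the paper: truncate $b,\sigma$ (and, implicitly in the paper, the intensities) so that Assumption \ref{asp3.1} holds, invoke Proposition \ref{prop3.3} for the truncated system, identify the two systems pathwise up to the first exit from a large ball, and close with a uniform-in-$\phi$ exit-probability estimate and the triangle inequality. Your treatment is, if anything, slightly more explicit than the paper's on the two delicate points (the retraction making $q_i$ globally bounded under Assumption \ref{asp2.2}, and the finite-regime confinement under Assumption \ref{asp2.1}, which the paper handles by citing its existence proofs and Lemma \ref{lm3.1}), so there is no gap to flag.
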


\begin{proof}
Let $f(\cdot,\cdot):\C\times\N\mapsto\R$ be a continuous function with $|f(\phi, i)|\leq 1\,\forall\,(\phi,i)\in \C\times\N$.
Fix $R>0, T>0$. Suppose that $\|\phi_0\|<R$.
Under the hypotheses of Theorem \ref{thm2.1}, or Theorem \ref{thm2.2}, or Theorem \ref{thm2.3},
it is shown in the proofs of those theorems that
for any $\eps>0$, there exists an $\tilde R>0$ such that
\begin{equation}\label{e.xtruncate}
\PP\{\|X^{\phi,i_0}_t\|\leq\tilde R\}>1-\dfrac\eps8\,\forall \|\phi\|\leq R+1.
\end{equation}
Let $\Phi(x): \R^n\mapsto\R$ be a twice continuously differentiable satisfying
$\Phi(x)=1$ if $|x|\leq\tilde R$
and $\Phi(x)=0$ if $|x|\geq \tilde R+1$.
Let $(\tilde X^{\phi,i_0}_t,\tilde \alpha^{\phi,i_0}(t))$ be the solution with initial data $(\phi,i_0)$ to
\begin{equation}\label{e3.22}
\begin{cases}
d\tilde X(t)=\Phi(\tilde X(t))b(\tilde X(t), \tilde \alpha(t))dt+\Phi(\tilde X(t))\sigma(\tilde X(t),\tilde \alpha(t))dW(t) \\
d\tilde \alpha(t)=\disp\int_{\R}h(\tilde X_t,\tilde \alpha(t-), z)\p(dt, dz).\end{cases}
\end{equation}
Then $(\tilde X^{\phi,i_0}(t),\tilde\alpha^{\phi,i_0}(t))=(X^{\phi,i_0}(t),\alpha^{\phi,i_0}(t))$ up to the time that $\|X^{\phi,i_0}_t\| >\tilde R$, which combined with \eqref{e.xtruncate}
implies
$$
\PP\{\tilde\Omega_{\phi,i_0}\}>1-\dfrac\eps8, \ \forall\,\|\phi\|<R
$$
where
$\tilde\Omega_{\phi,i_0}:=\{\tilde X^{\phi,i_0}_T=X^{\phi,i_0}_T, \tilde\alpha^{\phi,i_0}(T)=\alpha^{\phi,i_0}(T)\}$.
As a result, if $\|\phi\|<R$, we have
\begin{equation}\label{e3.23}
\begin{aligned}
\Big|\E f&( X^{\phi,i_0}_T,\alpha^{\phi,i_0}(T))-\E f( \tilde X^{\phi,i_0}_T,\tilde\alpha^{\phi,i_0}(T))\Big|\\
\leq&\E\left[\1_{\tilde\Omega^c_{\phi,i_0}}\Big|f( X^{\phi,i_0}_T,\alpha^{\phi,i_0}(T))-f( \tilde X^{\phi,i_0}_T,\tilde\alpha^{\phi,i_0}(T))\Big|\right]\,\, (\text{with }\tilde\Omega^c_{\phi,i_0}=\Omega\setminus \tilde\Omega_{\phi,i_0})\\
\leq&2\PP\left(\1_{\tilde\Omega^c_{\phi,i_0}}\right)\leq 2\dfrac\eps8=\dfrac\eps4.
\end{aligned}
\end{equation}

It follows from Proposition \ref{prop3.3} that there exists a $\delta\in(0,1)$ such that if $\|\phi-\phi_0\|<\delta$, we have
\begin{equation}\label{e3.24}
\Big|\E f(\tilde X^{\phi,i_0}_T,\tilde\alpha^{\phi,i_0}(T))-\E f(\tilde X^{\phi_0,i_0}_T,\tilde\alpha^{\phi_0,i_0}(T))\Big|<\dfrac\eps2.
\end{equation}
Since $\|f\|\leq 1$, we can easily obtain from \eqref{e3.23} and \eqref{e3.24} that
$$
\begin{aligned}
\Big|\E& f( X^{\phi,i_0}_T,\alpha^{\phi,i_0}(T))-\E f( X^{\phi_0,i_0}_T,\alpha^{\phi_0,i_0}(T))\Big|\\
\leq&\Big|\E f(\tilde X^{\phi,i_0}_T,\tilde\alpha^{\phi,i_0}(T))-\E f( \tilde X^{\phi_0,i_0}_T,\tilde\alpha^{\phi_0,i_0}(T))\Big|\\
&+\Big|\E f( X^{\phi,i_0}_T,\alpha^{\phi,i_0}(T))-\E f( \tilde X^{\phi,i_0}_T,\tilde\alpha^{\phi,i_0}(T))\Big|\\
&+\Big|\E f( X^{\phi_0,i_0}_T,\alpha^{\phi_0,i_0}(T))-\E f(\tilde X^{\phi_0,i_0}_T,\tilde\alpha^{\phi_0,i_0}(T))\Big|\\
<&\dfrac\eps2+\dfrac\eps4+\dfrac\eps4=\eps,\,\text{ if }\, \|\phi-\phi_0\|<\delta.
\end{aligned}
$$
The proof of the theorem is complete.
\end{proof}

\section{Feller Property of Hybrid Diffusion without Past Dependence}\label{sec:fel}
Now, we suppose that the
$q_{ij}, i,j\in\N$  associated with $\alpha(t)$ depend only the current state of $X(t)$.
To be more precise $q_{ij}(\cdot)$ is a function from $\R^n$ to $\R$ for each $(i,j)\in\N\times \N$.
As a special case of the hybrid diffusion with past-dependent switching, we obtain the following theorem.

\begin{thm}\label{thm5.1}
Assume that $q_{ij}(\cdot)$ is a continuous function for any $i, j\in\N$. Assume further that one of the following conditions is satisfied:
\begin{enumerate}[{\rm(A)}]
\item Assumption {\rm\ref{asp2.3}} and $q_{i}(y)=\sum_{j\ne i}q_{ij}(y)$ is bounded uniformly in $(i, y)\in\N\times\R^n$.
\item Assumption {\rm\ref{asp2.4}} and $q_{i}(y)$ is bounded uniformly in $(i,  y)\in\N\times K$ for each compact subset $K$ of $\R^n$.
\end{enumerate}
Then the unique solution to \eqref{e2.3} is a Markov process having the Feller property.
\end{thm}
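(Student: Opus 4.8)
The plan is to recognize this as the special case of the past-dependent theory of Section \ref{sec:mar} in which the switching intensities factor through evaluation at the present time, and then to descend from the function-valued Feller property to a Feller property on $\R^n\times\N$. First I would embed the present situation into the earlier framework by regarding each $q_{ij}:\R^n\to\R$ as a functional on $\C$ via $q_{ij}(\phi):=q_{ij}(\phi(0))$. Since the evaluation map $\phi\mapsto\phi(0)$ is continuous from $\C$ to $\R^n$, continuity of $q_{ij}$ on $\R^n$ yields continuity of $q_{ij}(\cdot)$ on $\C$. Next I would check that conditions (A) and (B) translate into hypotheses already treated: under (A) Assumption \ref{asp2.3} is unchanged and $\sup_{\phi,i}q_i(\phi)=\sup_{y,i}q_i(y)<\infty$ is exactly (ii) of Assumption \ref{asp2.1}; under (B) Assumption \ref{asp2.4} is unchanged and, because $|\phi(0)|\le\|\phi\|$, the quantity $M_H=\sup_{\|\phi\|\le H,\,i}q_i(\phi(0))\le\sup_{|y|\le H,\,i}q_i(y)$ is finite, which is (ii) of Assumption \ref{asp2.2}. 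Thus Theorem \ref{thm2.3} delivers existence and uniqueness and, crucially, the tightness estimate \eqref{e.xtruncate}.

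With these verifications in hand, the Feller property of the function-valued process $(X_t,\alpha(t))$ on $\C\times\N$ follows by repeating the truncation argument in the proof of Theorem \ref{thm3.2}. That argument localizes the coefficients with a cutoff $\Phi$ so that the truncated continuous component is confined to a fixed ball; since in the present setting the intensities see only the current value $\phi(0)$, which then lies in a compact set for all $t\ge0$, the truncated system satisfies Assumption \ref{asp3.1} under either (A) or (B), so Proposition \ref{prop3.3} applies, while the error committed by truncation is controlled by \eqref{e.xtruncate}. Hence $u_f(\phi,i)=\E f(X_T^{\phi,i},\alpha^{\phi,i}(T))$ is continuous in $\phi$ for every bounded continuous $f:\C\times\N\to\R$.

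The decisive step is to pass from this $\C$-valued continuity to continuity on $\R^n\times\N$. The key observation is that, because $b,\sigma$ and all $q_{ij}$ depend on the present state only, the strong solution of \eqref{e2.3} on $[0,\infty)$ is autonomous in $(X(t),\alpha(t))$; by the uniqueness in Theorem \ref{thm2.3}, two initial segments agreeing at $0$ produce the same solution for $t\ge0$ under the same driving noise. Consequently, for a bounded continuous $g:\R^n\times\N\to\R$, setting $f(\phi,i):=g(\phi(0),i)$ (bounded and continuous on $\C\times\N$) gives $u_f(\phi,i)=\E g(X^{\phi,i}(T),\alpha^{\phi,i}(T))$, a quantity depending on $\phi$ only through $\phi(0)$. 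Writing $v_g(x,i):=u_f(\bar x,i)$ with $\bar x\in\C$ the constant function equal to $x$, and noting that $x\mapsto\bar x$ is continuous from $\R^n$ into $\C$, the continuity of $u_f$ furnishes continuity of $x\mapsto v_g(x,i)$ for each fixed $i$; as $\N$ carries the discrete topology, this is precisely the Feller property of $(X(t),\alpha(t))$. The Markov property on $\R^n\times\N$ is immediate from the same autonomy observation together with Theorem \ref{thm3.1}.

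I expect the main obstacle to be this final descent: one must justify carefully that the function-valued transition operator depends on the initial segment only through its value at $0$, and that continuity transfers under the constant-function embedding, since a priori the Feller property of Theorem \ref{thm3.2} is a statement about the infinite-dimensional state $\phi\in\C$ rather than the point $\phi(0)\in\R^n$. A secondary point requiring care, rather than a genuine difficulty, is confirming that the truncation of Theorem \ref{thm3.2} indeed reduces condition (B) to Assumption \ref{asp3.1}, which it does because the cutoff confines the state and hence bounds the intensities.
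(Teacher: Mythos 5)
Your proposal is correct and takes the route the paper intends: the paper gives no separate proof of Theorem \ref{thm5.1}, merely asserting it as a special case of the past-dependent theory, and your reduction via $q_{ij}(\phi):=q_{ij}(\phi(0))$, the verification that (A) and (B) yield the two hypothesis sets of Theorem \ref{thm2.3}, the reuse of the truncation argument of Theorem \ref{thm3.2}, and the final descent from $\C\times\N$ to $\R^n\times\N$ through the constant-function embedding is exactly the omitted argument. The two points you flag as requiring care (the descent, and the fact that truncation bounds the intensities under (B)) are indeed the only places where detail is needed, and you resolve both correctly.
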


\begin{rem}\label{rem:con-f}{\rm
If for each discrete state $i \in \N$,
the diffusion $Y^{(i)}(t)$, which is
the solution process to
\begin{equation}\label{e5.1}
d Y^{(i)}(t)=b(Y^{(i)}(t),i)dt+\sigma(Y^{(i)}(t),i)dW(t)
\end{equation}
has the strong Feller property, we do not need the continuity of $q_{ij}(\cdot)$ to get the Feller property of $(X(t),\alpha(t))$.
In fact, we will obtain a stronger result, namely, the strong Feller property.
The condition for the strong Feller property of $Y^{(i)}(t)$
is essentially the ellipticity of $A(x, i)$ or the H\"ormander condition for hyperellipticity (see e.g., \cite{DN, SV}).
}\end{rem}

\begin{thm}\label{thm5.2}
Assume that $q_{ij}(\cdot)$ is measurable for any $i, j\in\N$ and either  {\rm(A)} or {\rm(B)} in Theorem {\rm\ref{thm5.1}} holds.
If for each $i\in\N$, the solution process $Y^{(i)}(t)$ to \eqref{e5.1} has the strong Feller property, then the unique solution to \eqref{e2.3} has the
 strong Feller property, that is, for each bounded measurable function $g(y,i):\R^n\times\N\to\R$,
the function $(x,i)\to \E g(X^{x,i}(T),\alpha^{x,i}(T))$ is continuous for each $T>0$.
\end{thm}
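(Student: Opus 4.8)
The plan is to exploit the fact that, when the switching rates depend only on the current state, the pair $(X(t),\alpha(t))$ is itself a Markov process on $\R^n\times\N$, and to reduce the strong Feller property to the assumed strong Feller property of the fixed-state diffusions $Y^{(i)}(t)$. Since $\N$ carries the discrete topology, it suffices to fix $i_0\in\N$ and prove that $x\mapsto u^T(x,i_0):=\E g(X^{x,i_0}(T),\alpha^{x,i_0}(T))$ is continuous; here $u^t(\cdot,\cdot)$ denotes the analogous value function over horizon $t$, so that $|u^t|\le\|g\|_\infty$. The whole point is that the strong Feller property will allow $g$, and hence the post-jump value functions, to be merely bounded and measurable rather than continuous.

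First I would derive a first-jump (renewal) decomposition. Up to the first switching time $\tau_1$ one has $X^{x,i_0}(\cdot)=Y^{(i_0),x}(\cdot)$ and $\alpha^{x,i_0}\equiv i_0$, and, by the past-independent specialization of Lemma \ref{lm3.0}, conditionally on the Brownian path the first jump has survival function $\exp(-\int_0^t q_{i_0}(Y^{(i_0),x}(s))\,ds)$ with post-jump state $j$ carrying the instantaneous rate $q_{i_0 j}(Y^{(i_0),x}(\cdot))$. Combining this with the strong Markov property at $\tau_1$ yields
\begin{equation*}
u^T(x,i_0)=\E_x\Big[g(Y^{(i_0),x}(T),i_0)\,e^{-\int_0^T q_{i_0}(Y^{(i_0),x}(s))\,ds}\Big]+\sum_{j\ne i_0}\int_0^T\E_x\Big[q_{i_0 j}(Y^{(i_0),x}(s))\,e^{-\int_0^s q_{i_0}(Y^{(i_0),x}(r))\,dr}\,u^{T-s}(Y^{(i_0),x}(s),j)\Big]\,ds.
\end{equation*}
Every term on the right is a Feynman--Kac expectation for the fixed-state diffusion $Y^{(i_0)}$, with a bounded measurable terminal function ($g(\cdot,i_0)$, respectively $q_{i_0 j}(\cdot)\,u^{T-s}(\cdot,j)$) and the nonnegative potential $q_{i_0}$; crucially the post-jump values enter only as bounded measurable integrands, so their continuity is never required.

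The heart of the argument is then a lemma stating that Feynman--Kac functionals preserve the strong Feller property: if $Y^{(i)}$ is strong Feller, $\Phi$ is bounded measurable and $V\ge0$ is bounded measurable, then $x\mapsto\E_x[\Phi(Y^{(i),x}(t))\,e^{-\int_0^t V(Y^{(i),x}(r))\,dr}]$ is continuous for each $t>0$. I would prove this by peeling off a short initial interval: writing $P^{(i)}_{s_1}$ for the transition semigroup of $Y^{(i)}$ and $G_{s_1}(y):=\E_y[\Phi(Y^{(i)}(t-s_1))e^{-\int_0^{t-s_1}V}]$, the Markov property gives $P^{(i)}_{s_1}G_{s_1}(x)=\E_x[\Phi(Y^{(i),x}(t))e^{-\int_{s_1}^t V}]$, so that its difference from the target equals $\E_x[\Phi(Y^{(i),x}(t))e^{-\int_{s_1}^t V}(e^{-\int_0^{s_1}V}-1)]$, which is bounded in absolute value by $\|\Phi\|_\infty\|V\|_\infty s_1$ uniformly in $x$. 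For each fixed $s_1>0$, $G_{s_1}$ is bounded measurable, hence $P^{(i)}_{s_1}G_{s_1}$ is continuous by strong Feller; letting $s_1\downarrow0$ exhibits the target as a uniform limit of continuous functions, hence continuous. Applying this lemma termwise to the decomposition, and then passing continuity through the $s$-integral and the sum over $j$ by dominated convergence (each integrand being dominated by $M\|g\|_\infty$ and the series by $\|g\|_\infty\int_0^T\E_x[q_{i_0}(Y^{(i_0),x}(s))]\,ds\le MT\|g\|_\infty$), gives the continuity of $u^T(\cdot,i_0)$.

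Under hypothesis (A) the potential $q_{i_0}$ is globally bounded and the lemma applies directly. Under (B), where $q_{i_0}$ is only bounded on compacts, I would localize by the exit time $\eta_R=\inf\{t:|Y^{(i_0),x}(t)|\ge R\}$, using the Lyapunov estimate of Assumption \ref{asp2.4} (exactly as in the proof of Theorem \ref{thm2.3}) to make $\PP\{\eta_R\le T\}$ uniformly small for $x$ near $x_0$, so that replacing $q_{i_0}$ by $q_{i_0}\wedge M_R$ perturbs $u^T$ by an arbitrarily small amount uniformly in such $x$; continuity for the truncated potential then survives the uniform approximation. I expect the main obstacle to be the rigorous justification of the Feynman--Kac strong Feller lemma and the attendant interchange of limits---in particular the dominated-convergence passage as the smoothing time $s_1\downarrow0$, where the strong Feller smoothing degenerates---since this is precisely where the infinite state space and the merely measurable $q_{ij}$ prevent one from importing the finite-state arguments verbatim, and where the uniform $L^\infty$ bounds must carry the entire load.
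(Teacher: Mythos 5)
Your route is genuinely different from the paper's. The paper does not perform a full first-jump (renewal) expansion at all: it fixes a small $t_0$, shows via Lemma \ref{lm3.0}(i) and the Lyapunov/localization bounds that the first switch occurs before $t_0$ only with probability $O(\eps)$ and that $e^{-\int_0^{t_0}q_i(Y(s))ds}$ differs from $1$ by $O(\eps)$ uniformly, writes $\E g(X^{y,i}(T),\alpha^{y,i}(T))=\E\,\Phi(X^{y,i}(t_0),\alpha^{y,i}(t_0))$ with $\Phi(\cdot,i)=u^{T-t_0}(\cdot,i)$ by the Markov property, and then applies the strong Feller property of $Y^{(i)}$ exactly once, to $\Phi(\cdot,i)$ at time $t_0$. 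All contributions from trajectories that do switch before $t_0$ are simply absorbed into the $O(\eps)$ error, so no infinite sum over post-jump states ever appears. Your Feynman--Kac lemma is correct and its proof is fine; in particular your stated worry about the passage $s_1\downarrow 0$ is unfounded, since the error $\|\Phi\|_\infty\|V\|_\infty s_1$ is uniform in $x$ and you are just taking a uniform limit of continuous functions.

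The genuine gap is elsewhere: the interchange of $x\to x_0$ with the infinite sum over $j$ in your decomposition. Each summand $x\mapsto\int_0^T\E_x\big[q_{i_0j}(Y(s))e^{-\int_0^s q_{i_0}}u^{T-s}(Y(s),j)\big]ds$ is continuous by your lemma, and the summands are nonnegative with a finite total, but a convergent series of nonnegative continuous functions is in general only lower semicontinuous; the dominating bound you invoke, $\|g\|_\infty\int_0^T\E_x[q_{i_0}(Y(s))]ds$, depends on $x$ and bounds the whole series rather than providing a summable majorant independent of $x$, so dominated convergence does not apply as stated. What is actually needed is uniform (in $x$ near $x_0$) smallness of the tails $\sum_{j>N}$. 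This is fillable with the tools you already have: under (A), $h_N(x):=\int_0^T\E_x\big[\sum_{j>N}q_{i_0j}(Y(s))\big]ds$ is continuous for each $N$ (strong Feller applied to the bounded measurable function $\sum_{j>N}q_{i_0j}$, plus dominated convergence in $s$) and decreases pointwise to $0$, so Dini's theorem gives uniform convergence on compacts; alternatively one can argue probabilistically as in Lemma \ref{lm3.1}. Without some such step the proof is incomplete, and this is precisely the complication that the paper's ``wait a short time, probably no jump occurs'' argument is designed to avoid.
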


\begin{proof}
We assume without loss of generality that $|g(z,i)|\leq 1\,\forall z\in\R^n,i\in\N$.
Let $Y^{y,i}(t)$ be the solution with initial data $y$ to \eqref{e5.1}. Fix $(x,i)\in\R^n\times\N$ and $\eps>0$.
Under the assumption $(A)$ or $(B)$, we can show that for each $x\in\R^n$, there is a  $K>0$ satisfying
\begin{equation}\label{e5.2}
\PP((\Omega_{\eps}^{y,i})^c)<\dfrac{\eps}8\,\forall y\in B(x,1):=\{z: |x-z|<1\},
\end{equation}
where $\Omega_{\eps}^{y,i}=\{|Y^{y,i}(t)|\leq K\,\forall t\in[0,1]\}$, $(\Omega_{\eps}^{y,i})^c=\Omega\setminus \Omega_{\eps}^{y,i}$.
Let $M=\sup_{i\in\N, |z|\leq K}q_{i}(z)$ and $t_0\in(0,1)$ satisfying
$1-\exp\{-Mt_0\}<\dfrac{\eps}{16}$. It follows from \eqref{e5.2} and (i) of Lemma \ref{lm3.0} that
\begin{equation}\label{e5.3}
\PP\{\tau^{y,i}>t_0\}>1-\dfrac{3\eps}{16}\,\forall y\in B(x,1),
\end{equation}
where $$\tau^{y,i}:=\inf\{t>0:\alpha^{y,i}(t)\ne i\}=\inf\Big\{t>0: \int_0^t\int_\R h(Y^{y,i}(s),i, u)\p(ds,du)\ne 0\Big\}.$$
Denote
$\Phi(y,i):=\E g\big(X^{y,i}(T-t_0),\alpha^{y,i}(T-t_0)\big)$.
The condition $|g(y, i)|\leq 1$ implies  $|\Phi(y,i)| \le 1$ for all $y\in\R^n, i\in\N$.
By the strong Feller property of $Y^{(i)}(t)$,
there is a $\delta>0$ such that
\begin{equation}\label{e5.4}
|\E \Phi(Y^{y,i}(t_0),i)-\E\Phi(Y^{x,i}(t_0),i)|\leq\dfrac\eps4\,\forall y\in B(x,\delta).
\end{equation}
By the strong Markov property of $X(t)$, we have
\begin{equation}\label{e5.5}
\begin{aligned}
\E  g\big(&X^{y,i}(T),\alpha^{y,i}(T)\big)=\E \Phi(X^{y,i}(t_0),\alpha^{y,i}(t_0))\\
&=\E \big[\1_{\{\tau^{y,i}>t_0\}}\Phi(X^{y,i}(t_0),\alpha^{y,i}(t_0))]+\E \big[\1_{\{\tau^{y,i}\leq t_0\}}\Phi(X^{y,i}(t_0),\alpha^{y,i}(t_0))].
\end{aligned}
\end{equation}
Applying (i) of Lemma \ref{lm3.0}, we obtain
\begin{equation}\label{e5.6}
\begin{aligned}
\E \big[&\1_{\{\tau^{y,i}>t_0\}}\Phi(X^{y,i}(t_0),\alpha^{y,i}(t_0))]\\
=&\E \big[\Phi(Y^{y,i}(t_0),\alpha^{y,i}(t_0))\exp\{-\int_0^{t_0}q_i(Y^{y,i}(s))ds\}\big]\\
=&\E \big[\Phi(Y^{y,i}(t_0),\alpha^{y,i}(t_0))\big]+\E \Big[\1_{(\Omega^{y,i}_\eps)^c}\Phi(Y^{y,i}(t_0),\alpha^{y,i}(t_0))\Big(\exp\{-\int_0^{t_0}q_i(Y^{y,i}(s))ds\}-1\Big)\Big]\\
&+\E \Big[\1_{\Omega_\eps^{y,i}}\Phi(Y^{y,i}(t_0),\alpha^{y,i}(t_0))\Big(\exp\{-\int_0^{t_0}q_i(Y^{y,i}(s))ds\}-1\Big)\Big].
\end{aligned}
\end{equation}
Note that if $|g(z,i)|\leq 1\,\forall z\in\R^n,i\in\N$ then $|\Phi(z,i)|\leq 1\,\forall z\in\R^n,i\in\N$.
We have the following estimates for $y\in B(x,\delta)$ using \eqref{e5.2}, \eqref{e5.3}, \eqref{e5.4}, and the fact that $|g(z,i)|, |\Phi(z,i)|\leq 1\,\forall z\in\R^n,i\in\N$.
\begin{equation}\label{e5.8}
\begin{aligned}
\Big|\E \big[\1_{(\Omega^{y,i}_\eps)^c}\Phi(Y^{y,i}(t_0),\alpha^{y,i}(t_0))\big(\exp\{-\int_0^{t_0}q_i(Y^{y,i}(s))ds\}-1\big)\big]\Big|
\leq\PP((\Omega^{y,i}_\eps)^c)\leq\dfrac{\eps}8,
\end{aligned}
\end{equation}
\begin{equation}\label{e5.9}
\Big|\E \Big[\1_{\Omega^{y,i}_\eps}\Phi(Y^{y,i}(t_0),\alpha^{y,i}(t_0))\Big(\exp\{-\int_0^{t_0}q_i(Y^{y,i}(s))ds\}-1\Big)\Big]\Big|\leq 1-\exp(-Mt_0)\leq \dfrac{\eps}{16},
\end{equation}
\begin{equation}\label{e5.10}
\E \big[\1_{\{\tau^{y,i}\leq t_0\}}\Phi(X^{y,i}(t_0),\alpha^{y,i}(t_0))]\Big|\leq\PP\{\tau^{y,i}\leq t_0\}\leq \dfrac{3\eps}{16}.
\end{equation}
Applying estimates \eqref{e5.4}, \eqref{e5.8}, \eqref{e5.9}, and \eqref{e5.10} to \eqref{e5.5} and \eqref{e5.6},
we have
$$\Big|\E \Phi(X^{y,i}(T),\alpha^{y,i}(T))-\E \Phi(X^{x,i}(T),\alpha^{x,i}(T))\Big|\leq\eps, \ \forall y\in B(x,\delta).$$
The proof is complete.
\end{proof}

\begin{rem}{\rm
Sufficient conditions for the strong Feller property of $(X(t), \alpha(t))$, in which the rates of switching $q_{ij}$ for $i,j\in\N$ depend only on the
current continuous state  $X(t)$, was obtained in Shao \cite{SJ}.
However, the conditions there are  restrictive.
To obtain the strong Feller property, it is assumed in
  \cite{SJ} that $q_{ij}(x), b(x, i)$ and $\sigma (x,i)$ are Lipschitz in $x$ uniformly in $i\in\N$.
  The ellipticity of $A(x,i)$ is also assumed
  to be uniform in $(x,i)\in\R^n\times\N$.
Moreover, it is assumed that $q_{ij}(x)=0$ if $|i-j|<k$ for some constant $k$.
It can be seen that our conditions in this paper are much more relaxed compared with the aforementioned reference.
}\end{rem}

\section{Further Remarks}\label{sec:rem}
This paper has been devoted to modeling and analysis of switching diffusions in which past-dependent
switching processes and countable switching sets are considered.  Many problems remain open.
Based on our results,  one may consider
 such properties as  recurrence, ergodicity, and stability.
Future work may also be directed to the study of switching diffusions in which the drift and diffusions are also past dependent.
It is important to work out all the details of the control problems presented in the previous sections, which may  open up a new avenue for
investigation of a wide range of control and optimization problems involving switching diffusions that are treated in this paper.

\appendix
\section{Appendix}\label{sec:sto}
This section is devoted to the proofs of some technical results.
To simplify the notation, we denote by $\PP_{\phi,i}$ the probability measure conditional on the initial value $(\phi,i)$,
that is, for any $t>0$,
$$\PP_{\phi,i}\{(X_t,\alpha(t))\in\cdot\}=\PP\{(X_t^{\phi,i},\alpha^{\phi,i}(t))\in\cdot\},$$
$$\PP_{\phi,i}\{(Y_t,\beta(t))\in\cdot\}=\PP\{(Y_t^{\phi,i},\beta^{\phi,i}(t))\in\cdot\},$$
and
$$\PP_{\phi,i}\{(Z_t,\gamma(t))\in\cdot\}=\PP\{(Z_t^{\phi,i},\gamma^{\phi,i}(t))\in\cdot\}.$$
Let $\E_{\phi,i}$ be the expectation associated with $\PP_{\phi,i}$.
First, we prove the following result.

\begin{lm}
Let either Assumption {\rm\ref{asp2.1}} or Assumption {\rm\ref{asp2.3}} combined with {\rm (ii)} of Assumption {\rm\ref{asp2.1}}
be satisfied.
Assume further
that $q_{i}(\cdot),q_{ij}(\cdot)$ are continuous functions in $\C$ for each $i,j\in\N$.
Then the solution $(X_t,\alpha(t))$ to \eqref{e2.3} satisfies \eqref{eq:sde} and \eqref{eq:tran}.
\end{lm}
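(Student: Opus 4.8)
The plan is as follows. Equation \eqref{eq:sde} holds automatically: by the interlacing construction in the proof of Theorem~\ref{thm2.1}, between consecutive jumps $X(t)$ solves the diffusion equation with coefficients frozen at the current discrete state, and since the jump times accumulate only at $+\infty$, the pieces patch together so that $X(t)$ satisfies \eqref{eq:sde} on all of $[0,T]$. The real content is \eqref{eq:tran}. By the strong Markov property established in Theorem~\ref{thm3.1}, the conditional law of $\alpha(t+\Delta)$ given $\{X_s,\alpha(s):s\le t\}$ depends on the past only through the current state $(X_t,\alpha(t))=(\phi,i)$, so I would reduce matters to showing, for every $(\phi,i)\in\C\times\N$,
$$\PP_{\phi,i}\{\alpha(\Delta)=j\}=q_{ij}(\phi)\Delta+o(\Delta)\ \ (i\ne j),\qquad \PP_{\phi,i}\{\alpha(\Delta)=i\}=1-q_i(\phi)\Delta+o(\Delta)$$
as $\Delta\downarrow 0$. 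Two features recorded in Section~\ref{sec:mar} then drive the argument: first, $(X^{\phi,i},\alpha^{\phi,i})$ coincides with the fixed-$i$ pair $(Y^{\phi,i},\beta^{\phi,i})$ up to their common first jump, so $\tau_1^{\phi,i}=\lambda_1^{\phi,i}$ and $\alpha_1^{\phi,i}=\beta_1^{\phi,i}$; second, since a jump of $\alpha$ requires a Poisson point of $\p$ in $[0,q_{\alpha(s-)}(X_s))\subseteq[0,M)$ with $M$ the uniform bound from (ii) of Assumption~\ref{asp2.1}, the number of jumps in $[0,\Delta]$ is stochastically dominated by a Poisson$(M\Delta)$ variable, whence the probability of two or more jumps is $O(\Delta^2)$ uniformly in $(\phi,i)$.

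For the off-diagonal case $i\ne j$, discarding this $O(\Delta^2)$ event leaves
$$\PP_{\phi,i}\{\alpha(\Delta)=j\}=\PP_{\phi,i}\{\lambda_1^{\phi,i}\le\Delta,\ \beta_1^{\phi,i}=j\}+O(\Delta^2).$$
I would apply part~(ii) of Lemma~\ref{lm3.0} with $T=\Delta$ and $g(\psi,t,k)=\1_{\{k=j\}}$, and take expectations of the conditional identity, to get
$$\PP_{\phi,i}\{\lambda_1^{\phi,i}\le\Delta,\ \beta_1^{\phi,i}=j\}=\E\int_0^\Delta q_{ij}(Y^{\phi,i}_t)\exp\Big(-\int_0^t q_i(Y^{\phi,i}_s)\,ds\Big)\,dt.$$
Since $0\le q_{ij}\le q_i\le M$, the integrand is bounded by $M$; and as $t\downarrow0$ the segment $Y^{\phi,i}_t\to\phi$ in $\C$ almost surely, because $Y^{\phi,i}$ has continuous paths with $Y^{\phi,i}_0=\phi$. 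Continuity of $q_{ij}$ and $q_i$ together with bounded convergence then give $\E[q_{ij}(Y^{\phi,i}_t)\exp(-\int_0^t q_i\,ds)]\to q_{ij}(\phi)$, and averaging over $[0,\Delta]$ produces $q_{ij}(\phi)\Delta+o(\Delta)$, as required.

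The diagonal case is handled identically: staying at $i$ up to $\Delta$ means either no jump occurred or at least two jumps occurred, the latter being $O(\Delta^2)$, so $\PP_{\phi,i}\{\alpha(\Delta)=i\}=\PP_{\phi,i}\{\lambda_1^{\phi,i}>\Delta\}+O(\Delta^2)$. Part~(i) of Lemma~\ref{lm3.0} gives $\PP_{\phi,i}\{\lambda_1^{\phi,i}>\Delta\}=\E\exp(-\int_0^\Delta q_i(Y^{\phi,i}_s)\,ds)$, and expanding the exponential (the remainder being $O(\Delta^2)$ because $\int_0^\Delta q_i\,ds\le M\Delta$) together with $\E\int_0^\Delta q_i(Y^{\phi,i}_s)\,ds=q_i(\phi)\Delta+o(\Delta)$ yields $1-q_i(\phi)\Delta+o(\Delta)$. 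The step I expect to require the most care is precisely the passage $\E\,q_{ij}(Y^{\phi,i}_t)\to q_{ij}(\phi)$ as $t\downarrow0$: this is where the newly imposed continuity of $q_{ij}$ and $q_i$ on $\C$ enters, and one must carefully justify both the sup-norm convergence of the segment $Y^{\phi,i}_t$ to $\phi$ and the interchange of limit and expectation, the latter being delivered by the uniform bound $q_{ij}\le M$. Finally, under the alternative hypothesis (Assumption~\ref{asp2.3} combined with (ii) of Assumption~\ref{asp2.1}) nothing in this argument changes, since (ii) of Assumption~\ref{asp2.1} still supplies the same uniform bound $M$ and Theorem~\ref{thm2.3} still guarantees a global-in-time solution.
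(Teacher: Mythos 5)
Your argument is correct, but it takes a genuinely different route from the paper's. The paper's proof is more direct: it applies the generalized It\^o formula to the function $V(\psi,k)=\1_{\{k=j\}}$ for the pair $(X_t,\alpha(t))$ itself, which immediately yields
$\PP_{\phi,i}\{\alpha(\Delta)=j\}=\E_{\phi,i}\int_0^\Delta q_{\alpha(t),j}(X_t)\,dt$
(with the convention $q_{jj}=-q_j$), and then passes to the limit $\Delta\downarrow 0$ using the right-continuity of $\alpha$ at $0$ (so $q_{\alpha(t),j}(X_t)\to q_{ij}(\phi)$ a.s.), the continuity of $q_{ij}$, the uniform bound $M$, and dominated convergence; the diagonal case is handled with $V(\psi,k)=\1_{\{k\ne i\}}$, and the Markov property finishes the reduction from conditioning on the whole past. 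You instead decompose according to the number of jumps in $[0,\Delta]$, dominate the two-or-more-jumps event by a Poisson$(M\Delta)$ tail to get an $O(\Delta^2)$ error, identify $(X,\alpha)$ with the fixed-$i$ pair $(Y,\beta)$ up to the first jump, and invoke the explicit first-jump distribution of Lemma~\ref{lm3.0}. Both are sound; the paper's version is shorter and never needs the multi-jump estimate or the auxiliary process, while yours makes the source of each term in \eqref{eq:tran} more transparent and cleanly isolates where the continuity of $q_{ij}$, the sup-norm convergence $Y^{\phi,i}_t\to\phi$, and the uniform bound $M$ each enter. Your remarks on \eqref{eq:sde} being automatic and on the second set of hypotheses (Assumption~\ref{asp2.3} plus (ii) of Assumption~\ref{asp2.1}) changing nothing are consistent with the paper.
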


\begin{proof}
It is clear that the solution $(X_t,\alpha(t))$ to \eqref{e2.3} satisfies \eqref{eq:sde}.
Fix $\phi\in\C,$ $i, j\in\N, i\ne j$. Applying the generalized It\^o formula
to the function $V(\psi,k)=0$ if $k\ne j$ and $V(\psi, j)=1$
we have
$$\PP_{\phi,i}\{\alpha(\Delta)=j\}=\E_{\phi,i}V(X_\Delta,\alpha(\Delta)
=\E_{\phi,i}\int_0^\Delta q_{\alpha(t),j}(X_t)dt,\,\,\, \text{ for } \Delta>0,$$
{\color{blue}where $q_{ii}(\phi):=-q_i(\phi)=-\sum_{j\ne i}q_{ij}(\phi)$.}
\footnote{ $q_{ii}(\phi)$ is not defined in the journal version (SICON) of this paper} Since $\alpha(t)$ is cadlag and $X(t)$ is continuous,
 $\lim_{t\to0^+}\alpha(t)=i$ and $\lim_{t\to0^+}X_t=\phi$ $\PP_{\phi,i}$-a.s.
In light of the continuity of $q_{ij}(\cdot)$ we obtain
$\lim_{t\to0^+}q_{\alpha(t),j}(X_t)=q_{ij}(\phi)\,\,\PP_{\phi,i}-\text{a.s.}$
which implies that
$$\lim_{\Delta\to0^+}\dfrac1\Delta\int_0^\Delta q_{\alpha(t),j}(X_t)dt=q_{ij}(\phi)\,\,\PP_{\phi,i}-\text{a.s.}$$
Since $q_{ij}(\cdot)$ is uniformly bounded, so is $ \disp\dfrac1\Delta\int_0^\Delta q_{\alpha(t),j}(X_t)dt$.
By virtue of the Lebegue dominated convergence theorem, we have
\begin{equation}\label{appendix.1}
\lim_{\Delta\to0^+}\dfrac{\PP_{\phi,i}\{\alpha(\Delta)=j\}}\delta
=\lim_{\Delta\to0^+}\E_{\phi,i}\left(\dfrac1\Delta\int_0^\Delta q_{\alpha(t),j}(X_t)dt\right)=q_{ij}(\phi).
\end{equation}
In the same manner, applying the generalized It\^o formula
to the function $V(\psi,k)=1$ if $k\ne i$ and $V(\psi, i)=0$,
we obtain that
\begin{equation}\label{appendix.2}
\lim_{\Delta\to0^+}\dfrac{1-\PP_{\phi,i}\{\alpha(\Delta)=i\}}\delta
=q_{i}(\phi).
\end{equation}
The proof is complete by
noting that \eqref{eq:tran} follows
from \eqref{appendix.1} and \eqref{appendix.2} and the Markov property of $(X(t),\alpha(t))$.
\end{proof}
Next, we provide the proofs of some results in Section 4.

\begin{proof}[Proof of Lemma \ref{lm3.0}]
To prove  claim (i), we apply the generalized It\^o formula to $V(j)=1$ if $j=i$, and $V(j)=0$ if $j\ne i$.
We have
$$V(\beta(\lambda_1\wedge t))=-\int_0^{\lambda_1\wedge t}q_{i}(Y_s)ds+\int_0^{\lambda_1\wedge t}\int_\R \Big(V\big(i+h(Y_t, i,z)\big)-1)\Big)\mu(ds,dz).$$
Since $W(\cdot)$ is independent of the Poisson random measure,
taking the conditional expectation with respect to $\F^W_T$ yields
$$
\begin{aligned}
\E_{\phi,i}\big[ \1_{\{\lambda_1> t\}}\big|\F^W_T\big]=&\E_{\phi,i}\big[ V(\lambda_1\wedge t)\big|\F^W_T\big]=-\E_{\phi,i}\big[\int_0^{\lambda_1\wedge t}q_{i}(Y_s)ds\big|\F^W_T\big]+1\\
=&-\E_{\phi,i}\big[\int_0^tq_{i}(Y_s)ds\1_{\{\lambda_1> s\}}\big|\F^W_T\big]+1\\
=&- \int_0^tq_{i}(Y_s)\E_{\phi,i}\big[\1_{\{\lambda_1> s\}}\big|\F^W_T\big]ds+1.
\end{aligned}
$$
Hence,
$$\dfrac{d}{dt}\E_{\phi,i}\big[ \1_{\{\lambda_1> t\}}\big|\F^W_T\big]=-q_{i}(Y_t)\E_{\phi,i}\big[ \1_{\{\lambda_1>t\}}\big|\F^W_T\big].$$
Since $\E_{\phi,i}\big[ \1_{\{\lambda_1> 0\}}\big|\F^W_T\big]=1$, we obtain
\begin{equation}\label{e3.2}
\PP_{\phi,i}\big(\{\lambda_1> t\}\big|\F^W_T\big)=\E_{\phi,i}\big[ \1_{\{\lambda_1> t\}}\big|\F^W_T\big]=\exp\Big(-\int_0^tq_{i}(Y_s)ds\Big)
.\end{equation}
Now we prove  claim (ii).
First, we
 try to find the distribution of $(\lambda_1, \beta_1)$ conditioned on $\F_T^W$ when $\lambda_1\in[0,T]$. Fix $j\ne i$ and let $f(t, k): [0,T]\times\N\to\N$ be any bounded measurable function satisfying
$f(t,k)=0$ if $k\ne j$.
Applying the generalized It\^o formula, we obtain
$$
\begin{aligned}
f(\lambda_1\wedge T, \beta(\lambda_1\wedge T))=&
\int_0^{\lambda_1\wedge T}q_{ij}(Y_t)f(t, j)dt\\
&+\int_0^{\lambda_1\wedge T}\int_\R \Big(f\big(s, i+h(Y_t, i,z)\big)-f(Y_t,i)\Big)\mu(ds,dz)
.\end{aligned}
$$
Since $W(\cdot)$ is independent of the Poisson random measure,
taking the conditional expectation with respect to $\F^W_T$, we have
$$
\begin{aligned}
\E_{\phi,i}\big[f(\lambda_1\wedge T, \beta(\lambda_1\wedge T))\big|\F^W_T\big]=&
\E_{\phi,i}\Big[\int_0^{\lambda_1\wedge T}q_{ij}(Y_t)f(t, j)dt\Big|\F^W_T\Big]\\
=&\E_{\phi,i}\Big[\int_0^{T}q_{ij}(Y_t)f(t, j)dt\1_{\{\lambda_1>t\}}dt\Big|\F^W_T\Big]\\
=&\int_0^{T}q_{ij}(Y_t)f(t, j)\E\big[\1_{\{\lambda_1>t\}}\big|\F^W_T\big]dt\\
=&\int_0^{T}q_{ij}(Y_t)f(t, j)\exp(-\int_0^tq_{i}(Y_s)ds)dt.
\end{aligned}
$$
As a result, for $t\in[0,T]$,
$$\PP_{\phi,i}\big\{\lambda_1\in dt, \beta(\lambda_1)=j\big|\F_T^W\big\}=q_{ij}(Y_t)\exp(-\int_0^tq_{i}(Y_s)ds)dt.$$
Thus,
$$
\begin{aligned}
\E_{\phi,i}\Big[g&(Y_{(1)},\lambda_1,\beta_1)\1_{\{\lambda_1\leq T\}}\Big|\F^W_T\Big]\\
=&\sum_{j=1, j\ne i}^\infty\int_0^Tg(Y_t,t,j)\PP_{\phi,i}\big\{\lambda_1\in dt, \beta(\lambda_1)=j\big|\F_T^W\big\}\\
=&\sum_{j=1, j\ne i}^\infty\int_0^Tg(Y_t,t,j)q_{ij}(Y_t)\exp(-\int_0^tq_{i}(Y_s)ds)dt
\end{aligned}
$$
as desired.
\end{proof}

\begin{proof}[Proof of Proposition \ref{prop3.2}]
First, we prove \eqref{e3.1} for the case $l=0$.
Since $(X_t,\alpha(t))=(Y_t,\lambda(t))$ up to the moment $\alpha_1=\lambda_1$,
we have
\begin{equation}\label{e3.5}
\begin{aligned}
\E_{\phi,i} \Big[f(&X_T,\alpha(T))\1_{\{\tau_1>T\}}\Big]=\E_{\phi,i} \Big[f(Y_T, i)\1_{\{\lambda_1>T\}}\Big]\\
=&\E_{\phi,i}\Big[\E_{\phi,i}\big(f(Y_T, i)\1_{\{\lambda_1>T\}}|\F^W_T\big)\Big]
=\E_{\phi,i}\Big[f(Y_T, i)\E_{\phi,i}\big(\1_{\{\lambda_1>T\}}|\F^W_T\big)\Big]\\
=&\E_{\phi,i}\Big[f(Y_T, i) \exp\Big(-\int_0^Tq_{i}(Y_s)ds\Big)\Big],
\end{aligned}
\end{equation}
where the last equality is
consequence of (i)
of Lemma \ref{lm3.0}.
Since $\gamma(\cdot)$ and $Y(\cdot)$ are independent, $Z_t=Y_t$ up to the moment $\theta_1$ and $\PP_{\phi,i}\{\theta_1>T\}=\exp(-T)$, we obtain
\begin{equation}\label{e3.6}
\begin{aligned}
\E_{\phi,i} \Big(f&(Z_T,\gamma(T))\1_{\{\theta_1>T\}} \exp\{-\int_0^Tq_{i}(Z_s)ds\}\Big)\\
=&\E_{\phi,i}\Big(f(Y_T, i)\1_{\{\theta_1>T\}} \exp\{-\int_0^Tq_{i}(Y_s)ds\}\Big)\\
=&\PP_{\phi,i}\{\theta_1>T\}\E_{\phi,i}\Big(f(Y_T, i) \exp\{-\int_0^Tq_{i}(Y_s)ds\}\Big)\\
=&\exp(-T)\E_{\phi,i}\Big(f(Y_T, i) \exp\{-\int_0^Tq_{i}(Y_s)ds\}\Big)
.\end{aligned}
\end{equation}
From \eqref{e3.5} and \eqref{e3.6}, we have for $t\in[0,T]$  that
\begin{equation}\label{e3.7}
\E f(X_T,\alpha(T))\1_{\{\tau_1>T\}}=\exp(T)\E \Big(f(Z_T,\gamma(T))\1_{\{\theta_1>T\}} \exp\{-\int_0^Tq_{i}(Z_s)ds\}\Big).
\end{equation}
We now prove \eqref{e3.1} for $l=1$.
Let $g(\phi, t, i)$, $\tilde g(\phi, t, i): \C\times[0,\infty)\times\N\to\R$ be bounded measurable functions and $g(\phi, t, i)=\tilde g(\phi, t, i)=0$ if $t>T$. It follows from (ii) of Lemma \ref{lm3.0} that
\begin{equation}\label{e3.8}
\begin{aligned}
\E_{\phi,i} g&(X_{(1)},\tau_1, \alpha_1)
=\E_{\phi,i} g(Y_{(1)},\lambda_1, \beta_1)\\
=&\sum_{i_1\ne i} \int_0^T \E_{\phi,i}\Big(g(Y_t, t, i_1)q_{ii_1}(Y_t)\exp(-\int_0^tq_{i}(Y_s)ds)\Big)dt.
\end{aligned}
\end{equation}
On the other hand, 
\begin{equation}\label{e3.9}
\begin{aligned}
\E_{\phi,i} \Big[\tilde g&(Z_{(1)},\theta_1, \gamma_1) \exp(-\int_0^{\theta_1}q_{i}(Z_s)ds)\Big]\\
=&\E_{\phi,i}\Big[\tilde g(Y_{(1)},\theta_1, \gamma_t) \exp(-\int_0^{\theta_1}q_{i}(Y_s)ds)\Big]\\
=&\sum_{i_1\ne i} \int_0^T \E_{\phi,i}\Big( \tilde g(Y_t, t, i_1)\exp(-\int_0^tq_{i}(Y_s)ds)\Big)\PP_{\phi,i}\{\theta_1\in dt, \gamma_1=i_1\}\\
=&\sum_{i_1\ne i} \int_0^T \E_{\phi,i}\Big(\tilde g(Y_t, t, i_1)\exp(-\int_0^tq_{i}(Y_s)ds)\Big)\rho_{ii_1}\exp(-t)dt.
\end{aligned}
\end{equation}
Substituting $\tilde g(\phi, t, i)=g(\phi, t, i) \exp(t)\times\dfrac{q_{ii}(\phi)}{\rho_{i i}}$ into \eqref{e3.9}, we have
\begin{equation}\label{e3.10}
\begin{aligned}
\E_{\phi,i}\Big[ g&(Z_{(1)},\theta_1, \gamma_1) \exp(\theta_1)\times\dfrac{q_{i\gamma_1}(Z_s)}{\rho_{i\gamma_1}} \exp(-\int_0^{\theta_1}q_{i}(Z_s)ds)\Big]\\
=&\sum_{i_1\ne i} \int_0^T \E_{\phi,i}\Big[ g(Y_t, t, i_1)\exp(t)\dfrac{q_{ii_1}(Y_t)}{\rho_{ii_1}}\exp(-\int_0^tq_{i}(Y_s)ds)\Big]\rho_{ii_1}\exp(-t)dt\\
=&\sum_{i_1\ne i} \int_0^T \E_{\phi,i}\Big[ g(Y_t, t, i_1)q_{ii_1}(Y_t)\exp(-\int_0^tq_{i}(Y_s)ds)\Big]dt.
\end{aligned}
\end{equation}
It follows from \eqref{e3.8} and \eqref{e3.10} that
\begin{equation}\label{e3.11}
\begin{aligned}
\PP_{\phi,i}\{&\tau_1\in dt, \alpha_1=i_1, X_{(1)}\in d\phi_1\}\\
=&\E_{\phi,i}\Big[\1_{\{\theta_1\in dt, \gamma_1=i_1, Z_{(1)}\in d\phi_1\}} \exp(t)\times\dfrac{q_{ii_i}(Z_t)}{\rho_{ii_1}} \exp(-\int_0^{t}q_{i}(Z_s)ds)\Big].
\end{aligned}
\end{equation}
We now use the strong Markov property of $(X_t,\alpha(t))$ and $(Z_t,\gamma(t))$,  \eqref{e3.11} as well as \eqref{e3.7} with $\phi,i, T$ replaced
by $\phi_1,i_1, T-t$, respectively;
\begin{equation}
\begin{aligned}
\E_{\phi,i} f&(X_T,\alpha(T))\1_{\{\tau_1\leq T<\tau_2\}}\1_{\{\alpha_1=i_1\}}\\
=&\int_0^T\int_\C\Big[\PP\{\tau_1\in dt, \alpha_1=i_1, X_{(1)}\in d\phi_1\}\times\E_{\phi_1,i_1} f(X_{T-t},\alpha(T-t))\1_{\{\tau_1>T-t\}}\Big]\\
=&\int_0^T\int_\C\Big[\E_{\phi,i}\Big(\1_{\{\theta_1\in dt, \gamma_1=i_1, Z_{(1)}\in d\phi_1\}} \exp(t)\times\dfrac{q_{ii_i}(Z_s)}{\rho_{ii_1}} \exp(-\int_0^{t}q_{i}(Z_s)ds)\Big)\\
&\qquad\times\exp(T-t)\E_{\phi_1,i_1} f(Z_{T-t},\gamma(T-t))\1_{\{\theta_1>T-t\}}\exp\{-\int_0^{T-t}q_{i_1}(Z_s)ds\}\Big]\\
=&\exp(T)\E_{\phi,i}\Big[ f(Z_T,\gamma(T))\1_{\{\theta_1\leq T<\theta_2\}}\1_{\{\gamma_1=i_1\}}\exp\Big(-\int_{\theta_1}^Tq_{i_1}(Z_s)ds\Big)\\
&\qquad\qquad\qquad \dfrac{q_{ii_1}(Z_{(1)})}{\rho_{ii_1}}\exp\Big(-\int_0^{\theta_1}q_{i}(Z_s)ds\Big)\Big].
\end{aligned}
\end{equation}
We have already proved \eqref{e3.1} for $l=0, 1$. Using the same argument, the induction,  and the strong Markov property of $(X_t,\alpha(t))$ and $(Z_t,\gamma(t))$, we can  obtain \eqref{e3.1} for any $l\in\N$.
\end{proof}
\begin{proof}[The proof of Lemma \ref{lm3.1}]
 By \eqref{e2.9}, we can find $m\in\N$ such that
\begin{equation}\label{e3.12}
\PP_{\phi,i_0}\big\{\tau_{m+1}<T\big\}<\dfrac{\Delta}2, \ \forall\,(\phi,i)\in\C\times\N.
\end{equation}
Now, let $\eps=\eps(\Delta)>0$
(to be specified
later).
In view of \cite[Theorem 4.3, p. 61]{XM}, for each $i\in\N$, there is a constant $C_i$ such that
\begin{equation}\label{e3.13}
\E_{\phi,i_0}|Y(t)-Y(s)|^6\leq C_i|t-s|^3\,\forall t,s\in[0,T],\,\forall\,\|\phi\|\leq R+1.
\end{equation}
By the Kolmogorov-Centsov theorem (see \cite[Theorem 2.8]{KS}), there is a positive random variable $h^\phi_i(\omega)$ such that
$$\PP_{\phi,i_0}\Big\{\sup\limits_{t,s\in[0,T],0<t-s<h^\phi_i(\omega)}\dfrac{|Y^{\phi, i}(t)-Y(s)|}{(t-s)^{0.25}}\leq 4\Big\}=1.$$
Since $C_i$ in \eqref{e3.13} does not depend on $\phi\in\{\psi: \|\psi\|\leq R+1\}$, it can be seen from the proof of the Kolmogorov-Centsov theorem that for any $\eps>0$, there is a constant $h_i>0$ satisfying
\begin{equation}\label{e3.12a}
\PP_{\phi,i_0}\Big\{\sup\limits_{t,s\in[0,T],0<t-s<h_i}\dfrac{|Y(t)-Y(s)|}{(s-t)^{0.25}}\leq 4\Big\}>1-\eps ,\ \forall\,\|\phi\|\leq R+1.
\end{equation}
Without loss of generality, we can choose $h_{i+1}<h_i,\forall i\in\N$.
Let
$$\h_{i, T}=\Big\{\psi(\cdot)\in \C([0,T],\R): \|\psi\|\leq R+1 \text{ and }\sup\limits_{t,s\in[0,T],0<t-s<h_i}\dfrac{|\psi(s)-\psi(t)|}{(s-t)^{0.25}}\leq 4\Big\},$$
and
$$\h_i=\Big\{\psi(\cdot)\in \C: \|\psi\|\leq R+1 \text{ and } \sup\limits_{t,s\in[-r,0],0<t-s<h_i}\dfrac{|\psi(s)-\psi(t)|}{(s-t)^{0.25}}\leq 4\Big\}.$$
Hence $\h_{i+1, T}\supset\h_{i, T}$ and $\h_{i+1}\supset\h_i$.
For $d>0$ and a compact set $\K\subset\C$, we define
$$\K_{d}:=\{\psi\in\C: \exists\phi\in\K\text{ such that } \|\psi-\phi\|<d\}.$$
Define $\K^0=\{\psi(\cdot)=\phi_0(\cdot)+c: c\in\R^n, |c|\leq 1 \}$, which is compact,
and $\K^1=\K^0\uplus\h_{i_0}.$
For each $\phi\in\K^1$, there is $n_{\phi,i_0}>i_0$  such that
$$\sum_{k=n_{\phi,i_0}+1}^\infty q_{i_0, k}(\phi)= q_{i_0}(\phi)-\sum_{k=1,k\ne i_0}^{n_{\phi,i_0}}q_{i_0k}(\phi)<\dfrac{\eps}2.$$
By the continuous of $q_{i_0}$ and $q_{i_0k}(\phi)$, there is a $d_{\phi,i_0}>0$ such that
$$\sum_{k=n(\phi)+1}^\infty q_{i_0, k}(\phi')=q_{i_0}(\phi')-\sum_{k=1,k\ne i_0}^{n_{\phi,i_0}}q_{i_0k}(\phi')<\eps\,\forall\, \|\phi'-\phi\|<d_{\phi,i_0}.$$
Since $\K^1$ is compact, there exist $n_1>0$ and $d_1>0$ such that
$$\sum_{k=n_1+1}^\infty q_{i_0, k}(\phi)<\eps\,\forall \phi\in\K^1_{d_1}.$$
Define $\K^2=\K^1\uplus\h_{n_1}.$
Using the compactness of $\K^2$, there exist $n_2>n_1$ and $d_2\in(0,d_1]$ such that
$$\sum_{k=n_2+1}^\infty q_{i, k}(\phi)<\eps\,\forall i\in N_{n_1},\phi\in\K^2_{d_2}.$$
Continuing this way, for
$\K^{m}=\K^{m-1}\uplus\h_{n_{m-1}},$ there exists $n_m>n_{m-1}$ and $d_m\in(0,d_{m-1}]$ such that
$$\sum_{k=n_{m}+1}^\infty q_{i, k}(\phi)<\eps\,\forall i\in N_{n_{m-1}},\phi\in\K^m_{d_m}.$$
Set $\K^{\phi,1}=\{\phi\}\uplus\h_{i_0}$ and $\K^{\phi,k}=\K^{\phi,k-1}\uplus\h_{n_{k-1}}$ for $\phi\in\C$ and $k=2,\dots,m$.
It is not difficult to verify that
\begin{equation}\label{e3.14}
\K^{\phi,k}\subset\K^k_{d_k}\,\forall\,k=1,\dots,m, \text{ for }\|\phi-\phi_0\|<\dfrac{d_m}2.
\end{equation}
Denote by $\{Y(\cdot)\in \h_{n_0,T}\}$ the event $\{t\in[0,T]\mapsto Y(t)$ is a function belonging to $\h_{n_0,T}\}$.
Clearly, if $Y(\cdot)\in \h_{i_0,T}$, then $Y_t\in K^{\phi,1}\,\forall t\in[0,T]$.
Thus, we can proceed as follows:
\begin{equation}
\begin{aligned}
\PP_{\phi,i_0}\big\{\tau_1&\leq T, \big(X_{(1)}, \alpha_1\big)\notin \K^{\phi,1}\times N_{n_1}\big\}\\
=&\PP_{\phi,i_0}\big(\{\tau_1\leq T, \alpha_1>n_1\}\cup\{\tau_1\leq T,X(\tau_1)\notin \K^{\phi,1}\}\big)\\
=&\PP_{\phi,i_0}\big(\{\lambda_1\leq T, \beta_1>n_1\}\cup\{\lambda_1\leq T,Y(\lambda_1)\notin \K^{\phi,1}\}\big)\\
\leq&\PP_{\phi,i_0}\{\lambda_1\leq T, Y(\cdot)\in \h_{n_0, T},\beta_1>n_1\}+\PP\{Y(\cdot)\notin \h_{n_0}^T\}\\
\leq&\E_{\phi,i_0}\big[\E\big(\1_{\{Y(\cdot)\in \h_{n_0, T}\}}\1_{\{\lambda_1\leq T,\beta_1>n_1\}}|\F^W_T\big)\big]+\eps\\
=&\E_{\phi,i_0}\big[\1_{\{Y(\cdot)\in \h_{n_0, T}\}}\E\big(\1_{\{\lambda_1\leq T,\beta_1>n_1\}}|\F^W_T\big)\big]+\eps\\
=&\E_{\phi,i_0}\big[\1_{\{Y(\cdot)\in \h_{n_0, T}\}}\int_0^T\sum_{i>n_1}q_{i_0,i}(Y_t)\exp\big(-\int_0^tq_{i_0}(Y_s)ds\big)dt\big]+\eps\\
\leq&\E_{\phi,i_0}\big[\1_{\{Y(\cdot)\in \h_{n_0, T}\}}\int_0^T\eps dt\big]+\eps\leq (T+1)\eps.
\end{aligned}
\end{equation}
Similarly, if $(\phi_1, i_1)\in \K^{\phi, 1}\times N_1$, then
$\PP_{\phi_1,i_1}\big\{\tau_1\leq T, \big(X_{(1)}, \alpha_1\big)\notin \K^{\phi,2}\times N_{n_2}\big\}\leq (T+1)\eps.$
 Using  the strong Markov property of $(X_t,\alpha(t))$, we obtain
$$
\begin{aligned}
\PP_{\phi,i_0}\Big\{&\tau_1<T, \big(X_{(1)},\alpha_1\big)\in \K^{\phi,1}\times N_{n_1}, \tau_2\leq T, \big(X_{(2)}, \alpha_2\big)\notin \K^{\phi,2}\times N_{n_2}\Big\}\\
\leq&\PP_{\phi,i_0}\big\{\tau_1<T, \big(X_{(1)},\alpha_1\big)\in \K^{\phi,1}\times N_{n_1}\big\}\\
&\!\!\!\!\times\PP_{\phi,i_0}\Big[\big\{\tau_2\leq T+\tau_1, \big(X_{(2)}, \alpha_2\big)\notin \K^{\phi,2}\times N_{n_2}\big\}\Big|\tau_1<T, \big(X_{(1)},\alpha_1)\big)\in \K^{\phi,1}\times N_{n_1}\!\Big]\\
\leq&\sup\limits_{(\phi_1,i_1)\in \K^\phi_1\times N_{n_1}}\PP_{\phi_1,i_1}\big\{\tau_1\leq T, \big(X_{(1)}, \alpha_1\big)\notin \K^{\phi,2}\times N_{n_2}\big\}\leq (T+1)\eps.
\end{aligned}
$$
Continuing this way, we can show for any $k=1,\dots,m$ that
\begin{equation}\label{e3.16}
\PP_{\phi,i_0}\Big\{\tau_k\leq T, \big(X_{\tau_k},\alpha_k\big)\notin \K^\phi_k\times N_{n_k}, \big(X_{(j)}, \alpha_j\big)\in \K^\phi_j\times N_{n_j},j=1,\dots,k-1\Big\}
\leq (T+1)\eps .
\end{equation}
Consequently,
$$\PP_{\phi,i_0}\Big\{\exists k=1,\dots,m: \tau_k\leq T \text{ and }\big(X_{(k)},\alpha_k\big)\notin \K^\phi_k\times N_{n_k} \Big\}\leq (T+1)m\eps.
$$
Hence, if we choose $\eps=\dfrac{1}{2m(T+1)}\Delta$,
\begin{equation}\label{e3.17}
\begin{aligned}
\PP_{\phi,i_0}\Big\{&\forall k=1,\dots,m: \tau_k> T\text{ or }\alpha_k\in N_{n_k} \Big\}\\
\geq& \PP\Big\{\forall k=1,\dots,m: \tau_k> T\text{ or }\big(X_{(k)},\alpha_k\big)\in \K^\phi_k\times N_{n_k} \Big\}\geq
1-\dfrac\Delta2.
\end{aligned}
\end{equation}
It follows from \eqref{e3.12} and \eqref{e3.17} that
\begin{equation*}
\PP_{\phi,i_0}\Big(\{\tau_{m+1}> T\}\cap\Big\{\forall k=1,\dots,m: \tau_k> T\text{ or }\alpha_k\in N_{n_k} \Big\}\Big)\geq 1-\Delta.
\end{equation*}
It is easily  verified that if $\omega\in \{\tau_{m+1}> T\}\cap\{\forall k=1,\dots,m: \tau_k> T\text{ or }\alpha_k\in N_{n_k} \}$,
then $\alpha(t)\in N_{n_m},\forall t\in[0,T]$.
The assertion of the lemma is proved.
\end{proof}

\end{document}